\newtheorem{thm}{Theorem}
\newtheorem{lem}[thm]{Lemma}
\newtheorem{cor}[thm]{Corollary}
\theoremstyle{remark}
\newcommand{\cv}{\mathbb{C}}
\newcommand{\rv}{\mathbb{R}}
\newcommand{\aut}{\textup{Aut}}
\def\Re{{\sf Re}\,}
\numberwithin{thm}{section}
\numberwithin{equation}{section}
\begin{document}

\title{Non-tangential Burns-Krantz rigidity}

\author{Feng Rong}

\address{Department of Mathematics, School of Mathematical Sciences, Shanghai Jiao Tong University, 800 Dong Chuan Road, Shanghai, 200240, P.R. China}
\email{frong@sjtu.edu.cn}

\subjclass[2020]{32H99, 32A40}

\keywords{boundary rigidity, non-tangential limit}

\thanks{The author is partially supported by the National Natural Science Foundation of China (Grant No. 12271350).}

\begin{abstract}
We prove some non-tangential Burns-Krantz type boundary rigidity theorems.
\end{abstract}

\maketitle

\section{Introduction}

In \cite{BK}, Burns and Krantz established the following fundamental rigidity result.

\begin{thm}\cite{BK}
Let $\Omega$ be a smoothly bounded strongly pseudoconvex domain in $\cv^n$, $n\ge 1$, $p\in \partial \Omega$, and $f$ be a holomorphic self-map of $\Omega$. If $f(z)=z+o(|z-p|^3)$ as $z\rightarrow p$, then $f(z)\equiv z$.
\end{thm}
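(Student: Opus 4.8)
The plan is to settle the one-dimensional case first and then reduce the several-variable statement to it. For $n=1$ we may take $\Omega=\mathbb{D}$ and $p=1$. Applying the Cayley transform $\phi(z)=(1+z)/(1-z)$, which sends $\mathbb{D}$ onto the right half-plane $\mathbb{H}=\{\Re w>0\}$ and $1$ to $\infty$, the map $F=\phi\circ f\circ\phi^{-1}$ is a holomorphic self-map of $\mathbb{H}$, and a direct computation gives $\phi(f(z))-\phi(z)=2(f(z)-z)/[(1-z)(1-f(z))]$. Since $1-f(z)=(1-z)(1+o(|z-1|^{2}))$ and $|z-1|\sim 2/|w|$, the hypothesis $f(z)=z+o(|z-1|^{3})$ translates into $F(w)=w+o(1/|w|)$ as $w\to\infty$. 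After the further inversion $w\mapsto 1/w$ this becomes a self-map $\tilde F$ of $\mathbb{H}$ fixing the boundary point $0$ with $\tilde F(\zeta)=\zeta+o(|\zeta|^{3})$. I would then invoke the Herglotz--Nevanlinna representation of self-maps of a half-plane: writing the nonnegative real part of $\tilde F$ as a Poisson integral against a positive measure plus a linear term, the third-order tangency to the identity at $0$ forces the representing measure to vanish and the linear coefficient to equal $1$, whence $\tilde F(\zeta)\equiv\zeta$ and $f\equiv\mathrm{id}$.

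For $n\ge 2$ the strategy is to restrict $f$ to complex geodesics of $\Omega$ whose closure contains $p$ and to apply the one-dimensional result. Using strong pseudoconvexity, through points near $p$ there passes a stationary (extremal) disc $\varphi\colon\mathbb{D}\to\Omega$ with $\varphi(\zeta)\to p$ as $\zeta\to 1$, together with a holomorphic left inverse (Lempert projection) $\rho\colon\Omega\to\mathbb{D}$ satisfying $\rho\circ\varphi=\mathrm{id}_{\mathbb{D}}$. The composition $h=\rho\circ f\circ\varphi$ is then a holomorphic self-map of $\mathbb{D}$, and, granting that $\varphi$ and $\rho$ extend smoothly to the boundary near $p$, the hypothesis on $f$ yields $h(\zeta)=\zeta+o(|\zeta-1|^{3})$; the one-dimensional case gives $h\equiv\mathrm{id}$. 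A standard extremality argument then upgrades $\rho\circ f\circ\varphi=\rho\circ\varphi$ to $f\circ\varphi=\varphi$, i.e.\ $f$ fixes this disc pointwise. Letting the disc vary over the $(n-1)$-parameter family of extremal discs landing at $p$, whose images fill an open subset of $\Omega$, the identity principle forces $f\equiv\mathrm{id}$ on all of $\Omega$.

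The principal obstacle lies entirely in the several-variable reduction: one needs the existence, uniqueness and---crucially---the boundary regularity of the extremal discs and of the Lempert projection at $p$, so that the precise $o(|z-p|^{3})$ decay is inherited by $h$ in all complex-normal and complex-tangential directions; this rests on Lempert-type theory for strongly pseudoconvex domains and on Hopf-lemma and boundary-distance estimates coming from the positivity of the Levi form. As an alternative that avoids constructing geodesics, I would be prepared to run the argument through invariant metrics: by the sharp asymptotics of the Kobayashi distance near a strongly pseudoconvex boundary point and the several-variable Julia--Wolff--Carath\'eodory theorem, a self-map tangent to the identity to third order at $p$ must preserve the horospheres at $p$ with unit dilation, and quantifying this rigidity forces $f=\mathrm{id}$. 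Either route localizes the problem near $p$ and reduces the genuinely higher-dimensional difficulty to controlling boundary behavior, which is precisely where strong pseudoconvexity is indispensable.
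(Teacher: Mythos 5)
This statement is the original Burns--Krantz theorem, which the paper quotes from \cite{BK} without giving a proof, so there is no in-paper argument to compare against; I am judging your sketch against the known proof in \cite{BK} and the machinery the paper itself relies on. Your one-dimensional argument is fine: after the Cayley transform the hypothesis becomes $F(w)=w+o(1/|w|)$ at infinity, and the Herglotz--Nevanlinna representation then kills the representing measure and forces $F=\textup{id}$. This is a legitimate, well-known alternative (it is essentially the route of Tauraso--Vlacci and Shoikhet) to the argument of \cite{BK}, which is reproduced in the proof of Theorem \ref{T:Delta} here: Julia's lemma makes $u=\Re\bigl(\frac{1+\zeta}{1-\zeta}-\frac{1+f(\zeta)}{1-f(\zeta)}\bigr)$ a nonpositive harmonic function that is $o(1-r)$ along the radius, and Hopf's lemma finishes. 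The two are equivalent in substance.

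The genuine gap is in the case $n\ge 2$. Your reduction needs, for an arbitrary smoothly bounded strongly pseudoconvex domain, a complex geodesic $\varphi$ landing at $p$ together with a globally defined holomorphic left inverse $\rho:\Omega\rightarrow\Delta$ that is regular up to the boundary. That package is Lempert theory, and it is available for strongly convex and strongly linearly convex domains --- which is exactly why the paper states Theorem \ref{T:geodesic} only in that setting --- but it fails for general strongly pseudoconvex domains: stationary discs exist only locally near $p$, and there is in general no holomorphic retraction onto such a disc. This obstruction is not a technicality to be "granted"; it is the reason the interior-fixed-point analogue for strongly pseudoconvex domains remained a conjecture of Huang until \cite{R}, and the reason \cite{BZZ} and \cite{FR} had to develop other localization tools. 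The actual proof in \cite{BK} avoids geodesics entirely: it splits $f-\textup{id}$ into complex-normal and complex-tangential components and runs Hopf-lemma arguments (a first-order one for the normal component, a second-order one for the tangential components) on negative harmonic functions manufactured from a peak/defining function at $p$. Your fallback via horospheres has the same shortfall --- preservation of horospheres yields Julia-type inequalities, not rigidity, without an additional Hopf-type step. A secondary gap: even where geodesics exist, upgrading $\rho\circ f\circ\varphi=\textup{id}_\Delta$ to $f\circ\varphi=\varphi$ requires a uniqueness statement for geodesics sharing only boundary data at $p$; in the paper's Theorem \ref{T:O} this step is anchored by an interior fixed point, which you do not have here.
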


Furthermore, they gave an example showing that the exponent 3 is optimal. Since then, many Burns-Krantz type rididity theorems have been obtained in various settings, see e.g. \cite{BZZ, BKR, EJLS, ELSS, ES, FR, FR1, GM, H:PJM93, H:IJM94, H:CJM95, KMac, LR, LRS, LS, Sh, TV, Z:rigidity} and the references therein.

In this paper, we study two questions regarding the Burns-Krantz rigidity.\\
{\bf Q1}: Is the rigidity valid if $f(z)=z+o(|z-p|^3)$ as $z\rightarrow p$ non-tangentially?\\
{\bf Q2}: What happens if $f$ has an interior fixed point?

Question 1 is quite natural, since a key ingredient for proving the Burns-Krantz rigidity is Hopf's lemma, which only concerns the radial limit of $f'$. Regarding this question, we mention the following two results.

\begin{thm}\cite[Proposition 1]{KMac}\label{T:KMac}
Let $f$ be a holomorphic self-map of $\Delta$ such that $f(\zeta)=\zeta+o(\zeta-1)$ as $\zeta\rightarrow 1$ non-tangentially. If
$$\liminf_{r\rightarrow 1^-} \frac{\Re(f(r)-r)}{(1-r)^3}=0,$$
then $f(\zeta)\equiv \zeta$.
\end{thm}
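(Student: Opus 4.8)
The plan is to pass to the standard Herglotz function attached to the boundary fixed point $1$ and to read off the third-order information from the real axis by an \emph{exact} computation rather than a linearization. I set
$$q(\zeta)=\frac{1+f(\zeta)}{1-f(\zeta)}-\frac{1+\zeta}{1-\zeta}=\frac{2(f(\zeta)-\zeta)}{(1-f(\zeta))(1-\zeta)}.$$
The hypothesis $f(\zeta)=\zeta+o(\zeta-1)$ (non-tangentially) says that $f$ fixes $1$ with angular derivative $1$, so Julia's lemma gives $\Re\frac{1+f}{1-f}\ge\Re\frac{1+\zeta}{1-\zeta}$, i.e. $\Re q\ge0$ on $\Delta$. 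Hence $q$ has a Herglotz representation $q(\zeta)=ic+\int_{\partial\Delta}\frac{e^{i\theta}+\zeta}{e^{i\theta}-\zeta}\,d\mu(\theta)$ with $c\in\rv$ and a finite positive measure $\mu$. I would first record that the same first-order hypothesis forces $(1-r)q(r)=\frac{2(f(r)-r)}{1-f(r)}\to0$ as $r\to1^-$, which shows that $\mu$ carries no atom at the point $1$ (angle $\theta=0$).

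Next I would use the inverse identity $f(\zeta)-\zeta=\frac{q(\zeta)(1-\zeta)^2}{2+q(\zeta)(1-\zeta)}$ on the segment $\zeta=r\in(0,1)$. Writing $u=\Re q(r)\ge0$, $v=\Im q(r)$ and $\delta=1-r$, a direct computation gives
$$\frac{\Re(f(r)-r)}{(1-r)^3}=\frac{\tfrac{2u}{\delta}+u^2+v^2}{(2+u\delta)^2+v^2\delta^2}.$$
Since $(1-r)q(r)\to0$, the denominator tends to $4$, so the hypothesis $\liminf_{r\to1^-}\frac{\Re(f(r)-r)}{(1-r)^3}=0$ becomes $\liminf_{r\to1^-}\bigl(\tfrac{2\Re q(r)}{1-r}+|q(r)|^2\bigr)=0$. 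As both summands are nonnegative, there is a sequence $r_n\to1^-$ along which $\frac{\Re q(r_n)}{1-r_n}\to0$ and $q(r_n)\to0$. Finally I would feed the first limit into the representation of $\Re q$: from
$$\frac{\Re q(r)}{1-r}=\int_{\partial\Delta}\frac{1+r}{|e^{i\theta}-r|^2}\,d\mu(\theta),$$
Fatou's lemma along $r_n$ (the integrand is nonnegative and $\mu$ has no mass at $\theta=0$) forces $\int_{\partial\Delta}\frac{d\mu(\theta)}{2\sin^2(\theta/2)}=0$, whence $\mu\equiv0$. Then $q\equiv ic$, and $q(r_n)\to0$ gives $c=0$; thus $q\equiv0$, which is exactly $f(\zeta)\equiv\zeta$.

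The crux is the second step. A naive expansion gives $f(r)-r\approx\tfrac12(1-r)^2 q(r)$, suggesting that only $\Re q(r)$ enters at third order; this is misleading, because when $\Re q(r)$ is small the term $|q(r)|^2$ — the contribution of $\Im q(r)$ — survives at order $(1-r)^3$. This is precisely the term that separates the identity from a parabolic automorphism (for which $q\equiv ic$ and the ratio tends to $c^2/4>0$), so keeping the exact rational expression rather than linearizing is essential. The remaining care lies in ruling out an atom of $\mu$ at $1$ and in the Fatou lower bound, both of which become routine once the non-tangential first-order hypothesis is invoked.
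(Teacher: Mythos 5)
Your argument is correct: the identity $q=\frac{2(f-\zeta)}{(1-f)(1-\zeta)}$ and its inverse $f-\zeta=\frac{q(1-\zeta)^2}{2+q(1-\zeta)}$ check out, the exact formula $\frac{\Re(f(r)-r)}{(1-r)^3}=\bigl(\tfrac{2u}{\delta}+u^2+v^2\bigr)/\bigl((2+u\delta)^2+v^2\delta^2\bigr)$ is right, and the chain (no atom of $\mu$ at $1$ from $(1-r)q(r)\to0$, extraction of a sequence along which both nonnegative summands vanish, Fatou against the Poisson kernel quotient, then $c=0$ from $q(r_n)\to0$) closes without gaps. Note, however, that the paper does not prove this statement itself -- it is quoted from Kriete--MacCluer -- and the closest in-house argument is the second half of the proof of Theorem \ref{T:Delta}, which runs differently: there one works with the single negative harmonic function $u=-\Re q$, shows the hypothesis forces $\liminf_{r\to1^-}\frac{-u(r)}{1-r}=0$, and invokes Hopf's lemma to conclude $u\equiv0$. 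Your route replaces Hopf's lemma by the Herglotz representation of $q$ plus Fatou's lemma, which costs a little more machinery but buys two things: it makes explicit where the $|q|^2$ term (the imaginary part of $q$ on the radius) enters at order $(1-r)^3$, and it disposes of the parabolic-automorphism case $q\equiv ic$, $c\neq0$, cleanly via $q(r_n)\to0$ -- a case that the Hopf-lemma version, which only yields $\Re q\equiv0$, must also rule out by appealing to the hypothesis a second time (since for $q\equiv ic$ one has $\Re(f(r)-r)/(1-r)^3\to c^2/4$). Your closing remark that linearizing $f(r)-r\approx\tfrac12(1-r)^2q(r)$ would lose exactly this term is the right diagnosis of where the difficulty sits.
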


\begin{thm}\cite[Corollary 1.2]{BZZ}\label{T:BZZ}
Let $M\subset \cv^n$, $n\ge 2$, be a strongly pseudoconvex hypersurface at a point $p\in M$ and $f$ be a germ at $p$ of a holomorphic self-map of the pseudoconvex side such that $f(z)=z+o(|z-p|^3)$ as $z\rightarrow p$ non-tangentially. Then $f(z)\equiv z$.
\end{thm}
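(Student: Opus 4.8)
The plan is to reduce the statement to the one-variable result of Theorem~\ref{T:KMac} by slicing along the complex normal direction, and then to propagate rigidity transversally. First I would normalize. Taking $p=0$ and using that $M$ is strongly pseudoconvex at $p$, I would choose local holomorphic coordinates $(z_1,z')\in\cv\times\cv^{n-1}$ in which the pseudoconvex side $D$ osculates the Siegel domain $\mathcal S=\{(z_1,z'):\Re z_1>|z'|^2\}$ to high order at $0$, the $z_1$-axis being the complex normal direction; this is possible to any prescribed finite order by Moser's normal form. Conjugating $f$ by such a biholomorphism $\Phi$ (with $\Phi(0)=0$) preserves the hypothesis: since $\tilde f=\Phi\circ f\circ\Phi^{-1}$ fixes the identity, a Taylor estimate transfers $f(z)=z+o(|z-p|^3)$ to $\tilde f(z)=z+o(|z|^3)$, and $\Phi$ carries Stolz regions at $p$ to Stolz regions at $0$, so the non-tangential qualifier survives. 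Henceforth I treat $D$ as $\mathcal S$, the osculation error being of higher order than anything entering the estimates below.

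For the slicing, let $c(\zeta)=\tfrac{1-\zeta}{1+\zeta}$ be the Cayley involution, $\gamma(\zeta)=(c(\zeta),0)$ the normal geodesic with $\gamma(1)=0$, and $\pi(z_1,z')=c(z_1)$ its holomorphic left inverse (well defined since $\Re z_1>|z'|^2\ge0$ on $\mathcal S$), so $\pi\circ\gamma=\mathrm{id}_\Delta$. Put $h=\pi\circ f\circ\gamma$, a holomorphic self-map of $\Delta$. A Stolz approach $\zeta\to1$ is carried by $\gamma$ to a non-tangential approach $\gamma(\zeta)\to0$ in $\mathcal S$, so writing $f=(f_1,f')$ and $\eta(\zeta)=f_1(\gamma(\zeta))-c(\zeta)=o(|c(\zeta)|^3)$, a direct M\"obius computation gives
\[
h(\zeta)-\zeta=\frac{-2\,\eta(\zeta)}{(1+c(\zeta)+\eta(\zeta))(1+c(\zeta))}=o\big(|1-\zeta|^3\big)
\]
non-tangentially, using $|c(\zeta)|\sim\tfrac12|1-\zeta|$. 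In particular $h(\zeta)=\zeta+o(\zeta-1)$ non-tangentially and, taking $\zeta=r\to1^-$, $\liminf_{r\to1^-}\Re(h(r)-r)/(1-r)^3=0$. Theorem~\ref{T:KMac} then forces $h\equiv\mathrm{id}$, i.e. $f_1(z_1,0)=z_1$ for all $z_1$ in the right half-plane.

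Next I would upgrade this to $f(z_1,0)=(z_1,0)$ and then to $f\equiv\mathrm{id}$. Since $f(\gamma(\zeta))\in\mathcal S$ has first coordinate $c(\zeta)$, the map $G(z_1):=f'(z_1,0)$ satisfies $|G(z_1)|^2<\Re z_1$ on the half-plane; being bounded near $0$ with non-tangential boundary value $0$ on a boundary set of positive measure (indeed $|G|\le(\Re z_1)^{1/2}\to0$ toward the boundary), Privalov's uniqueness theorem gives $G\equiv0$, so $f$ fixes the normal geodesic pointwise. Finally, expanding $f(z_1,z')=(z_1,0)+L(z_1)z'+O(|z'|^2)$ and feeding this into the self-map inequality $\Re f_1(z)>|f'(z)|^2$ together with the cubic non-tangential vanishing, I would kill the coefficients jet by jet in $z'$, concluding $f\equiv\mathrm{id}$ and, undoing the normalization, $f(z)\equiv z$.

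The step I expect to be the main obstacle is this last transversal bootstrap. The non-tangential hypothesis directly feeds only the normal direction---the complex-tangential geodesics landing at $0$ approach tangentially, so Theorem~\ref{T:KMac} cannot be applied along them---so all information about the $z'$-dependence of $f$ must be extracted from the single scalar inequality $\Re f_1>|f'|^2$ in the anisotropic geometry of $\mathcal S$ (where $z_1$ scales like the boundary distance and $z'$ like its square root). Keeping the cubic bookkeeping consistent through this anisotropy, and simultaneously controlling the higher-order osculation error left over from normalizing a genuine, non-convex strongly pseudoconvex hypersurface (so that $\gamma$ and $\pi$ are only approximate), is where the real work lies.
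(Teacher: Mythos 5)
First, a point of comparison: the paper offers no proof of this statement --- it is quoted as \cite[Corollary 1.2]{BZZ} --- so your attempt can only be measured against the Baracco--Zaitsev--Zampieri argument, whose engine is a \emph{family} of analytic discs attached to the hypersurface and landing non-tangentially at $p$, filling an open subset of the pseudoconvex side, with a one-variable Hopf-lemma/rigidity argument run on \emph{each} disc so that $f=\mathrm{id}$ on an open set and hence everywhere. You use a single normal disc and then try to recover the tangential directions a posteriori; that last step is where the proposal genuinely fails, and you have in effect flagged this yourself.

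Two concrete gaps. The lesser one is localization: $f$ is only a germ, so $h=\pi\circ f\circ\gamma$ is defined only for $\zeta$ near $1$ (and only where $f(\gamma(\zeta))$ stays in the region where $\Re z_1>0$), while Theorem \ref{T:KMac} needs a self-map of all of $\Delta$; moreover a smooth strongly pseudoconvex hypersurface osculates the Siegel domain only to finite order, so $\gamma(\Delta)$ need not lie in $D$ and the error is not automatically ``of higher order than anything entering the estimates.'' These are repairable (horodiscs, Julia's lemma, a conformal shrinking fixing $1$), but they are real work. The fatal gap is the transversal bootstrap. After $f(z_1,0)=(z_1,0)$ is established, the only inputs you propose are the scalar inequality $\Re f_1>|f'|^2$ and the cubic vanishing \emph{inside a non-tangential cone}. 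The inequality plus fixing the normal disc is satisfied by non-identity maps such as $(z_1,z')\mapsto(z_1,Uz')$ with $U$ unitary, so everything must come from the cone hypothesis; but a non-tangential cone at $0$ in the Siegel domain contains only points with $|z'|=O(|z_1|)$, far inside the anisotropic scale $|z'|\sim(\Re z_1)^{1/2}$, and Cauchy estimates on that thin slab yield only pointwise asymptotic bounds like $\|L(z_1)-I\|=o(|z_1|^2)$ as $z_1\to 0$ --- an asymptotic rate at a single boundary point, which does not force $L\equiv I$ (consider $L=I+z_1^3A$) without running a Julia--Wolff/Hopf-type rigidity argument on each coefficient, i.e.\ without re-doing the one-variable theorem on a family of discs. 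That family-of-discs step is the missing idea, and it is precisely the content of the cited proof.
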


Question 2 is also natural, since the boundary Schwarz lemma has two versions with or without interior fixed point, and the Burns-Krantz theorem is the rigidity part of the boundary Schwarz lemma. Regarding this question, we mention the following two results.

\begin{thm}\cite[Corollary 1.5]{H:CJM95}\label{T:HD}
Let $D$ be a smoothly bounded domain in $\cv$, $p\in \partial D$, and $f$ be a holomorphic self-map of $D$. If $f(z_0)=z_0$ for some $z_0\in D$ and $f(z)=z+o(|z-p|)$ as $z\rightarrow p$, then $f(z)\equiv z$.
\end{thm}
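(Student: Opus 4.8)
The plan is to reduce to the unit disk $\Delta$ by uniformization and then exploit the interior fixed point through Julia's lemma, letting the boundary little-$o$ condition force the equality (rigidity) case. First I would uniformize: when $D$ is simply connected, take a Riemann map $\phi\colon D\to\Delta$; since $\partial D$ is smooth, the Kellogg--Warschawski theorem gives $\phi\in C^1(\overline D)$ with $\phi'(p)\ne 0$, so $g:=\phi\circ f\circ\phi^{-1}$ is a holomorphic self-map of $\Delta$ fixing the interior point $a:=\phi(z_0)$. By the chain rule and $|\phi(z)-\phi(p)|\asymp|z-p|$ near $p$, the condition transfers to $g(\zeta)=\zeta+o(|\zeta-q|)$ at $q:=\phi(p)\in\partial\Delta$. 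Post-composing with the disk automorphism sending $a\mapsto 0$ and a rotation sending the image of $q$ to $1$ (all conformal and smooth up to the boundary, hence preserving the little-$o$ condition), I reduce to the normalized problem: $g\colon\Delta\to\Delta$ holomorphic, $g(0)=0$, and $g(\zeta)=\zeta+o(|\zeta-1|)$ as $\zeta\to 1$, with goal $g\equiv\mathrm{id}$.

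Next I would pass to the right half-plane $H=\{\Re w>0\}$ via the Cayley transform $w=\frac{1+\zeta}{1-\zeta}$, under which $\zeta=0\mapsto w=1$ and $\zeta=1\mapsto\infty$, and one has the identity $\Re\frac{1+\zeta}{1-\zeta}=\frac{1-|\zeta|^2}{|1-\zeta|^2}$. Then $G:=C\circ g\circ C^{-1}$ is a holomorphic self-map of $H$ fixing $w=1$, and a short computation turns the hypothesis into $G(w)=w+o(|w|)$ as $w\to\infty$, so $G$ fixes $\infty$ with unit angular derivative. Writing $\alpha=\liminf_{\zeta\to 1}\frac{1-|g(\zeta)|}{1-|\zeta|}$, the radial estimate from $g(r)=r+o(1-r)$ gives $\alpha\le 1$, so $\alpha$ is finite and Julia's lemma applies, yielding
$$\frac{|1-g(\zeta)|^2}{1-|g(\zeta)|^2}\le\alpha\,\frac{|1-\zeta|^2}{1-|\zeta|^2}\qquad(\zeta\in\Delta).$$
Evaluating at $\zeta=0$ gives $1\le\alpha$, whence $\alpha=1$; translating through the Cayley identity, the inequality becomes exactly $\Re G(w)\ge\Re w$ on all of $H$.

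Finally I would invoke the interior fixed point. The function $\Re\bigl(G(w)-w\bigr)$ is harmonic and nonnegative on $H$ and vanishes at the interior point $w=1$ because $G(1)=1$; by the minimum principle it vanishes identically. Hence $G(w)-w$ is holomorphic with zero real part, so it is a purely imaginary constant $it$, and $G(1)=1$ forces $t=0$; thus $G\equiv\mathrm{id}$, and undoing the conformal changes gives $f\equiv z$. I expect the main obstacle to be the boundary bookkeeping rather than the disk argument: one must check that the little-$o$ condition genuinely survives the uniformizing map (this is where smoothness of $\partial D$ and Kellogg--Warschawski enter) and, when $D$ is not simply connected, replace $\phi$ by the universal covering $\pi\colon\Delta\to D$, lift $f$ to the fixed-point-preserving lift $\tilde f$, and transfer the boundary data through a local inverse of $\pi$ over the smooth arc near $p$. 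Verifying that this correct lift really produces a boundary fixed point of $\tilde f$ with the same little-$o$ behavior is the delicate point; once it is in place, the Julia/minimum-principle core applies verbatim.
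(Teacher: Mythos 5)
First, note that the paper does not actually prove Theorem \ref{T:HD}: it is quoted from Huang \cite[Corollary 1.5]{H:CJM95}, and the closest in-paper analogues are the interior-fixed-point step in the proof of Theorem \ref{T:Delta} and its planar-domain consequence Corollary \ref{C:D}. Your disk-case core is correct but takes a genuinely different route. You evaluate the Julia horocycle inequality at the interior fixed point to force $\alpha=1$, translate it to $\Re G(w)\ge\Re w$ on the half-plane, and then kill the nonnegative harmonic function $\Re(G(w)-w)$ by the minimum principle at the interior fixed point. The paper instead uses the Schwarz lemma to make $v(\zeta)=\Re\bigl(\frac{f(\zeta)-\zeta}{f(\zeta)+\zeta}\bigr)$ a negative harmonic function and applies Hopf's lemma at the boundary point, using the radial decay $v(r)=o(1-r)$. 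Your version is cleaner when an interior fixed point is available (no boundary derivative estimate beyond Julia's lemma is needed); the paper's harmonic-function-plus-Hopf template has the advantage that a variant of it also runs in the fixed-point-free case, which is what Theorem \ref{T:Delta} needs.

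The one genuine gap is the one you flag yourself without closing: for multiply connected $D$, the lift of $f$ that fixes a preimage $\tilde z_0$ of $z_0$ and the lift that equals $(\pi|_U)^{-1}\circ f\circ\pi$ near the relevant boundary point of $\Delta$ (and hence satisfies $\tilde f(\zeta)=\zeta+o(\zeta-1)$) differ a priori by a deck transformation, and a nontrivial deck transformation of the covering $\pi:\Delta\to D$ is a M\"obius transformation that in general does not fix that boundary point. Your argument needs both properties to hold for the \emph{same} lift, and you do not show that they do. The paper's own machinery sidesteps this cleanly: Lemma \ref{L:order} transfers only the boundary condition to the boundary-normalized lift, Theorem \ref{T:Delta} (hence Theorem \ref{T:D}) yields the dichotomy that either $f\equiv\textup{id}$ or $f^k\rightarrow p$, and the interior fixed point rules out the second alternative without ever being lifted, since $f^k(z_0)=z_0\neq p$. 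Adopting that dichotomy (or, equivalently, noting that a non-identity boundary-normalized lift would have Wolff point $1$ and force $f^k(z_0)\rightarrow p$) closes your gap.
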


\begin{thm}\cite[Corollary 2.7]{H:CJM95}\label{T:HO}
Let $\Omega$ be a smoothly bounded strongly convex domain in $\cv^n$, $n\ge 2$, $p\in \partial\Omega$, and $f$ be a holomorphic self-map of $\Omega$. If $f(z_0)=z_0$ for some $z_0\in \Omega$ and $f(z)=z+o(|z-p|^2)$ as $z\rightarrow p$, then $f(z)\equiv z$.
\end{thm}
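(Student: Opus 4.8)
The plan is to combine a one-dimensional reduction, which exploits the interior fixed point through Theorem \ref{T:HD}, with a boundary jet-determination argument, which exploits the strong pseudoconvexity. Throughout I would use Lempert's theory: since $\Omega$ is smoothly bounded and strongly convex, there is a complex geodesic $\varphi\colon\Delta\to\Omega$, smooth up to $\partial\Delta$, with $\varphi(0)=z_0$ and $\varphi(1)=p$, together with its holomorphic left inverse (Lempert projection) $\rho\colon\Omega\to\Delta$, also smooth up to the boundary, satisfying $\rho\circ\varphi=\mathrm{id}_\Delta$; moreover $\varphi'(1)\neq0$ is transverse to $\partial\Omega$, so that $|\varphi(\sigma)-p|\asymp|1-\sigma|$ as $\sigma\to1$.

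First I would push $f$ down to the disc by setting $h:=\rho\circ f\circ\varphi\colon\Delta\to\Delta$. Then $h(0)=\rho(f(z_0))=\rho(z_0)=0$, so $h$ inherits the interior fixed point. Using that $\rho$ is Lipschitz near $p$ and that $\rho\circ\varphi=\mathrm{id}$, one gets
\[
h(\sigma)-\sigma=\rho\bigl(f(\varphi(\sigma))\bigr)-\rho\bigl(\varphi(\sigma)\bigr)=O\bigl(|f(\varphi(\sigma))-\varphi(\sigma)|\bigr)=o\bigl(|\varphi(\sigma)-p|^2\bigr)=o\bigl(|1-\sigma|^2\bigr)
\]
as $\sigma\to1$ in $\Delta$, the unrestricted limit being available because $\varphi$ and $\rho$ extend smoothly to the closed disc. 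In particular $h(\sigma)=\sigma+o(|\sigma-1|)$, so Theorem \ref{T:HD} applied to $D=\Delta$ yields $h\equiv\mathrm{id}_\Delta$. Since the geodesic is the unique holomorphic right inverse of $\rho$ through $z_0$ (a Lempert-theoretic uniqueness statement, provable by a boundary maximum-principle argument as in the model case of the ball), it follows that $f\circ\varphi=\varphi$, i.e. $f$ fixes the geodesic disc $\varphi(\Delta)$ pointwise.

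Next I would upgrade this to $f\in\aut(\Omega)$. Since $\Omega$ is bounded and $f(z_0)=z_0$, the iterates $f^{k}$ converge to a holomorphic retraction $\gamma$ of $\Omega$ onto a complex submanifold $M\ni z_0$ with $f|_M\in\aut(M)$; if $f$ is not an automorphism then $\dim M<n$ and $df_{z_0}$ has eigenvalues of modulus $<1$ in directions transverse to $M$. But $\varphi(\Delta)\subseteq\mathrm{Fix}(f)\subseteq M$, so $p\in\overline M$, and approaching $p$ transversally to $M$ one would have $f(z)-z$ comparable to $\mathrm{dist}(z,M)\asymp|z-p|$, contradicting $f(z)=z+o(|z-p|^2)$. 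Hence $f\in\aut(\Omega)$. Finally, by Fefferman's theorem $f$ extends smoothly to $\overline\Omega$, and $f(z)=z+o(|z-p|^2)$ forces the $2$-jet of $f$ at $p$ to coincide with that of the identity; since a CR automorphism of the strongly pseudoconvex boundary $\partial\Omega$ is determined by its $2$-jet at a point (Chern--Moser jet determination), this gives $f|_{\partial\Omega}=\mathrm{id}$ and therefore $f(z)\equiv z$.

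The main obstacle is the globalization in the last two steps. The hypothesis is localized at $p$, and the reduction above feeds only the single geodesic terminating at $p$; all geodesics issuing from $z_0$ in other directions end at boundary points where $f-\mathrm{id}$ is uncontrolled, so one cannot pin $df_{z_0}=I$ by one-dimensional slicing alone. Ruling out the non-automorphism case therefore requires genuinely transverse information, and the delicate point is to make the transverse-contraction estimate uniform up to the boundary point $p\in\overline M$; this is where I expect the real work to lie. Note that a purely first-order hypothesis would already be killed here by a transverse retraction such as $\varphi\circ\rho$, which fixes the geodesic but is not the identity, whereas once $f$ is known to be an automorphism the exponent $2$ is exactly what the Chern--Moser $2$-jet determination consumes; this explains why the threshold is $|z-p|^2$ rather than the $|z-p|^3$ of the fixed-point-free Burns--Krantz theorem.
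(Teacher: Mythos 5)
The statement you are proving is quoted from \cite[Corollary 2.7]{H:CJM95} and is not reproved in this paper; the closest thing to ``the paper's proof'' is the argument for the non-tangential analogue, Theorem \ref{T:O}. Your first half coincides with that argument (and with Huang's): take the Lempert geodesic $\varphi$ through $z_0$ and $p$ with left inverse, push $f$ down to a self-map of $\Delta$ fixing $0$ and tangent to the identity at $1$, conclude it is the identity, and deduce from uniqueness of geodesics that $f$ fixes $\varphi(\Delta)$ pointwise. Up to there your proposal is sound.

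The second half is where you depart from the paper, and it contains a genuine gap that you yourself flag: the step ``if $f\notin\aut(\Omega)$ then approaching $p$ transversally to the limit retract $M$ one has $f(z)-z$ comparable to $\mathrm{dist}(z,M)\asymp|z-p|$'' is not justified and I do not see how to justify it. The transverse contraction of $df$ is an interior statement at $z_0$ (eigenvalues of $df_{z_0}$ of modulus $<1$), while the hypothesis $f(z)=z+o(|z-p|^2)$ is an asymptotic statement at the boundary; nothing forces the transverse eigenvalues of $df_z$ to stay uniformly away from $1$ as $z\to p$ along $\varphi(\Delta)\subseteq M$, so no contradiction is extracted. The paper (following Huang) closes the argument completely differently and without ever deciding whether $f$ is an automorphism: once $f$ fixes the transversal disc $\varphi(\Delta)$, it invokes Theorem \ref{T:Huang} (Huang's Theorem 2.2) with $\epsilon=\mu=1$ --- a Hopf-lemma argument applied to a negative plurisubharmonic function built from a defining function pulled back along $\varphi$ --- and it is this step, not Chern--Moser jet determination, that consumes the exponent $2$. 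If you want to salvage your route you would need either to prove the uniform transverse estimate near $p$ (this is essentially new work, not a routine verification) or to replace the whole second half by the Hopf-lemma argument of \cite[Theorem 2.2]{H:CJM95}. The Fefferman-plus-Chern--Moser endgame for the automorphism case is correct in principle but is much heavier machinery than what is actually needed once the disc is fixed.
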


The exponent 2 in Theorem \ref{T:HO} is optimal as an example in \cite{H:CJM95} shows. Moreover, it was conjectured in \cite{H:CJM95} that a similar result should hold for bounded strongly pseudoconvex domains. This was recently proven by the author in \cite{R} (see also \cite{H:IJM94, FR} for partial results in $\cv^2$).

In section \ref{S:Delta}, we prove some non-tangential boundary rigidity results for domains in $\cv$, namely, Theorem \ref{T:Delta}, Corollary \ref{C:Delta}, Theorem \ref{T:D} and Corollary \ref{C:D}. In section \ref{S:O}, we establish non-tangential boundary rigidity results for strongly linearly convex domains in $\cv^n$, $n\ge 2$, namely, Theorem \ref{T:O} and Corollary \ref{C:O}. A few more non-tangential boundary rigidity results were given for strongly pseudoconvex domains and ``fibered domains" in section \ref{S:other}.

\section{Domains in $\cv$}\label{S:Delta}

Let us first recall some definitions and results from the iteration theory on the unit disk $\Delta$. The main reference here is \cite{A:book}.

The \textit{horocycle} of center $\sigma\in \partial \Delta$ and radius $R>0$ is the set
$$E(\sigma,R)=\left\{\zeta\in \Delta;\ \frac{|\sigma-\zeta|^2}{1-|\zeta|^2}<R\right\}.$$

\begin{lem}[Julia's lemma]
Let $f$ be a holomoprhic self-map of $\Delta$. Suppose that $\sigma\in \partial \Delta$ is a boundary fixed point of $f$, i.e. $f(\zeta)\rightarrow \sigma$ as $\zeta\rightarrow \sigma$. Set
$$\alpha=\liminf_{D\ni \zeta\rightarrow \sigma} \frac{1-|f(\zeta)|}{1-|\zeta|}.$$
Then, $f(E(\sigma,R))\subset E(\sigma,\alpha R)$ for any $R>0$, i.e.,
$$\frac{|\sigma-f(\zeta)|^2}{1-|f(\zeta)|^2}\le \alpha\frac{|\sigma-\zeta|^2}{1-|\zeta|^2},\ \ \ \zeta\in \Delta.$$
\end{lem}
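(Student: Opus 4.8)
The plan is to derive the horocyclic inequality directly from the Schwarz--Pick lemma together with the algebraic identity
$$1-\left|\frac{z-w}{1-\bar w z}\right|^2=\frac{(1-|z|^2)(1-|w|^2)}{|1-\bar w z|^2},\qquad z,w\in\Delta,$$
followed by a limiting argument along a sequence realizing the liminf $\alpha$. First I would fix $\zeta\in\Delta$ and choose a sequence $w_n\rightarrow\sigma$ in $\Delta$ with $\tfrac{1-|f(w_n)|}{1-|w_n|}\rightarrow\alpha$; since $\sigma$ is a boundary fixed point, we automatically have $f(w_n)\rightarrow\sigma$, and in particular $|w_n|,|f(w_n)|\rightarrow 1$. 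Note that this sequence depends only on $f$ and $\sigma$, not on $\zeta$, so the resulting estimate will hold uniformly in $\zeta$.

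Next I would apply the Schwarz--Pick inequality to the pair $\zeta,w_n$, namely
$$\left|\frac{f(\zeta)-f(w_n)}{1-\overline{f(w_n)}f(\zeta)}\right|\le\left|\frac{\zeta-w_n}{1-\overline{w_n}\zeta}\right|,$$
and rewrite both sides using the identity above. Taking reciprocals and clearing the common factor $1-|f(w_n)|^2$ turns this into
$$\frac{|1-\overline{f(w_n)}f(\zeta)|^2}{1-|f(\zeta)|^2}\le\frac{1-|f(w_n)|^2}{1-|w_n|^2}\cdot\frac{|1-\overline{w_n}\zeta|^2}{1-|\zeta|^2},$$
which isolates, on the left, exactly the quantity attached to the image point $f(\zeta)$.

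Finally I would let $n\to\infty$. Because $w_n\to\sigma$ and $f(w_n)\to\sigma$ with $|\sigma|=1$, the factors $1-\overline{f(w_n)}f(\zeta)$ and $1-\overline{w_n}\zeta$ converge to $1-\bar\sigma f(\zeta)$ and $1-\bar\sigma\zeta$, whose squared moduli equal $|\sigma-f(\zeta)|^2$ and $|\sigma-\zeta|^2$; and the ratio $\tfrac{1-|f(w_n)|^2}{1-|w_n|^2}$ tends to $\alpha$ by the choice of the sequence, since $1+|f(w_n)|,1+|w_n|\to 2$. This yields the asserted inequality, from which the inclusion $f(E(\sigma,R))\subset E(\sigma,\alpha R)$ is immediate. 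The main point to handle carefully is the passage to the limit: one must verify that the chosen sequence simultaneously controls the liminf ratio and forces $f(w_n)\to\sigma$, so that the boundary factors converge to the correct horocyclic quantities. The degenerate case $\alpha=+\infty$ is trivial, since then the target horocycle is all of $\Delta$.
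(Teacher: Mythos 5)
Your argument is correct and is the classical Schwarz--Pick proof of Julia's lemma: the displayed inequality after clearing the factor $1-|f(w_n)|^2$ is right, the boundary fixed point hypothesis does guarantee $f(w_n)\to\sigma$ so the factors $|1-\overline{f(w_n)}f(\zeta)|^2$ and $|1-\overline{w_n}\zeta|^2$ converge to $|\sigma-f(\zeta)|^2$ and $|\sigma-\zeta|^2$, and the ratio of the $1-|\cdot|^2$ terms tends to $\alpha$ as you say. The paper itself gives no proof of this lemma --- it is quoted as a classical fact from Abate's book --- and your proof is essentially the standard one found there, so there is nothing to compare beyond noting that it checks out.
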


\begin{lem}[Wolff's lemma]
Let $f$ be a holomorphic self-map of $\Delta$. If $f$ has no interior fixed point, then there exists an unique point $\sigma\in \partial \Delta$, called the \textit{Wolff point} of $f$, such that $f(E(\sigma,R))\subset E(\sigma,R)$ for any $R>0$, i.e.,
$$\frac{|\sigma-f(\zeta)|^2}{1-|f(\zeta)|^2}\le \frac{|\sigma-\zeta|^2}{1-|\zeta|^2},\ \ \ \zeta\in \Delta.$$
\end{lem}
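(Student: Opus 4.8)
The plan is to establish existence of the Wolff point $\sigma$ by a dilation-and-fixed-point argument, deduce the horocycle inequality by passing to the limit in the invariant Schwarz--Pick lemma, and prove uniqueness by a tangency argument for horocycles. For $r\in(0,1)$ set $f_r(\zeta)=rf(\zeta)$; then $f_r$ maps $\overline{\Delta}$ continuously into $r\overline{\Delta}\subset\Delta$, so by Brouwer's fixed point theorem $f_r$ has a fixed point $z_r$, which necessarily lies in $r\overline{\Delta}\subset\Delta$. Choosing $r_k\to 1^-$ and passing to a subsequence, I may assume $z_{r_k}\to\sigma\in\overline{\Delta}$. The crucial observation is that $\sigma\in\partial\Delta$: were $\sigma$ interior, letting $k\to\infty$ in $r_k f(z_{r_k})=z_{r_k}$ and using continuity of $f$ at $\sigma$ would give $f(\sigma)=\sigma$, contradicting the absence of an interior fixed point.

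Since each $f_r$ has the interior fixed point $z_r$, the invariant form of the Schwarz--Pick lemma applied to $f_r$ gives, for every $\zeta\in\Delta$,
$$\left| \frac{f_r(\zeta) - z_r}{1 - \overline{z_r}\, f_r(\zeta)} \right| \le \left| \frac{\zeta - z_r}{1 - \overline{z_r}\,\zeta} \right|.$$
Invoking the identity $1 - \left| \frac{a - w}{1 - \overline{w}\, a} \right|^2 = \frac{(1 - |w|^2)(1 - |a|^2)}{|1 - \overline{w}\, a|^2}$ and cancelling the common positive factor $1-|z_r|^2$, this is equivalent to
$$\frac{|1 - \overline{z_r}\, f_r(\zeta)|^2}{1 - |f_r(\zeta)|^2} \le \frac{|1 - \overline{z_r}\,\zeta|^2}{1 - |\zeta|^2}.$$
Now I let $r=r_k\to 1^-$, so that $f_{r_k}(\zeta)\to f(\zeta)$ and $z_{r_k}\to\sigma$ with $|\sigma|=1$; since $|1-\overline{\sigma}\,\eta|=|\sigma-\eta|$ for all $\eta$, the inequality passes to the limit to yield
$$\frac{|\sigma - f(\zeta)|^2}{1 - |f(\zeta)|^2} \le \frac{|\sigma - \zeta|^2}{1 - |\zeta|^2},$$
which is exactly $f(E(\sigma,R))\subset E(\sigma,R)$ for all $R>0$.

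For uniqueness, suppose $\sigma_1\ne\sigma_2$ both satisfy the horocycle inclusions. Each $E(\sigma_i,R)$ is a Euclidean disk internally tangent to $\partial\Delta$ at $\sigma_i$, with center $\sigma_i/(1+R)$ and radius $R/(1+R)$; for small $R$ these disks are disjoint, so I can enlarge the radii until the two closed horocyclic disks first touch, at which moment they are tangent at a single point $\zeta_0$ lying on the segment between their (interior) centers, hence in $\Delta$. By continuity the inclusions extend to the closures intersected with $\Delta$, so $f(\zeta_0)$ lies in both $\overline{E(\sigma_1,R_1)}$ and $\overline{E(\sigma_2,R_2)}$, whose intersection is $\{\zeta_0\}$; thus $f(\zeta_0)=\zeta_0$, again contradicting the hypothesis. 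The step requiring the most care is the passage to the limit in the Schwarz--Pick inequality: cancelling $1-|z_r|^2$ \emph{before} taking $r\to 1^-$ is precisely what keeps both sides finite as $|z_r|\to 1$ and produces the sharp horocycle inequality rather than a degenerate bound.
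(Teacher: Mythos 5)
The paper does not prove this lemma; it is quoted as a classical result from Abate's book, so there is no in-paper argument to compare against. Your proof is the standard one (approximate $f$ by the strict contractions $f_r=rf$, extract a boundary limit of their fixed points, pass to the limit in the Schwarz--Pick inequality, and get uniqueness from externally tangent horocycles), and it is essentially correct. The one inaccuracy is at the very first step: $f$ is only a self-map of the open disk $\Delta$, so $f_r$ is not defined on $\overline{\Delta}$ and you cannot apply Brouwer there as written. The fix is immediate: since $f_r(\Delta)\subset r\Delta$, the map $f_r$ sends the compact convex set $r\overline{\Delta}$ into itself, and Brouwer applies to that restriction; alternatively, $z_r$ is the unique fixed point of the strict contraction $f_r$ for the hyperbolic metric. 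The remaining steps are sound: the cancellation of $1-|z_r|^2$ before letting $r\to 1^-$ is exactly what makes the limit nondegenerate, the denominators $1-|f_{r_k}(\zeta)|^2$ stay bounded away from $0$ for fixed $\zeta$, and in the uniqueness argument the first-contact point of the two closed horocycles cannot lie on $\partial\Delta$ (it would have to equal both $\sigma_1$ and $\sigma_2$), so it is a genuine interior fixed point, giving the contradiction. One could state more carefully why $f$ preserves $\overline{E(\sigma_i,R_i)}\cap\Delta$ (e.g.\ via $\overline{E(\sigma,R)}\cap\Delta=\bigcap_{R'>R}E(\sigma,R')$), but your continuity argument suffices since $E(\sigma,R)$ is an open disk.
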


\begin{thm}[Wolff-Denjoy theorem]
Let $f$ be a holomorphic self-map of $\Delta$. Then, $f$ has no interior fixed point if and only if $f^k\rightarrow \sigma$ normally on $\Delta$, where $\sigma\in \partial \Delta$ is the Wolff point of $f$.
\end{thm}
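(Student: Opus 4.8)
The plan is to establish the two implications of the equivalence separately. The implication ``$f^k\to\sigma$ normally $\Rightarrow$ $f$ has no interior fixed point'' is immediate: an interior fixed point $z_0$ would satisfy $f^k(z_0)=z_0$ for all $k$, so $z_0=\lim_k f^k(z_0)=\sigma\in\partial\Delta$, which is absurd. The whole content lies in the converse, so assume from now on that $f$ has no interior fixed point. By Wolff's lemma the Wolff point $\sigma$ is well defined and each horocycle $E(\sigma,R)$ is $f$-invariant, hence $f^k\big(E(\sigma,R)\big)\subset E(\sigma,R)$ for every $k\ge1$ and every $R>0$.

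I would first dispose of the case $f\in\aut(\Delta)$ by the classification of fixed-point-free automorphisms. Transferring to the upper half-plane, such an $f$ is conjugate either to a parabolic map $w\mapsto w+b$ or to a hyperbolic map $w\mapsto\lambda w$ with $\lambda>0$, $\lambda\neq1$; in each case a direct computation shows that the iterates converge, uniformly on compacta, to the attracting boundary fixed point, which is exactly $\sigma$. This settles the automorphism case, and I assume henceforth that $f$ is not an automorphism.

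The heart of the proof is the non-automorphism case, which I would run through a normal-families argument. By Montel's theorem $\{f^k\}$ is a normal family, so it suffices to show that every locally uniform subsequential limit $g=\lim_j f^{k_j}$ equals the constant $\sigma$; then the full sequence converges to $\sigma$. The crucial step---and the one I expect to be the main obstacle---is to prove that $g$ is constant. Suppose not; then $g\colon\Delta\to\overline\Delta$ is nonconstant, so by the open mapping theorem $g(\Delta)$ is open and therefore $g(\Delta)\subset\Delta$. Passing to a further subsequence so that $f^{k_{j+1}-k_j}\to h$ locally uniformly (again by Montel), and noting that $g(\Delta)$ is relatively compact in $\Delta$ so that the composition of limits is legitimate, I take limits in $f^{k_{j+1}}=f^{k_{j+1}-k_j}\circ f^{k_j}$ to obtain $g=h\circ g$. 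Thus $h$ fixes every point of the open set $g(\Delta)$, whence $h\equiv\mathrm{id}$. Now fix $z\neq w$ and set $a_k=\rho\big(f^k(z),f^k(w)\big)$, where $\rho$ is the Poincar\'e distance. By the Schwarz--Pick lemma $(a_k)$ is nonincreasing, and since $f$ is not an automorphism the inequality is strict at the first step, $a_1<a_0=\rho(z,w)$. On the other hand $f^{k_{j+1}-k_j}\to\mathrm{id}$ forces $a_{k_{j+1}-k_j}\to\rho(z,w)=a_0$, while $k_{j+1}-k_j\ge1$ gives $a_{k_{j+1}-k_j}\le a_1<a_0$, a contradiction. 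Hence $g\equiv c$ for some constant $c\in\overline\Delta$.

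It remains to identify $c$ with $\sigma$. First, $c\notin\Delta$: otherwise, comparing $f^{k_j+1}=f\circ f^{k_j}\to f(c)$ with $f^{k_j+1}=f^{k_j}\circ f\to c$ would yield $f(c)=c$, an interior fixed point. So $c\in\partial\Delta$. Finally, the $f$-invariance of the horocycles gives $f^{k_j}(0)\in E(\sigma,R)$ for every $R>1$, so $c\in\overline{E(\sigma,R)}$; since each closed horocycle is internally tangent to $\partial\Delta$ at $\sigma$, one has $\overline{E(\sigma,R)}\cap\partial\Delta=\{\sigma\}$, forcing $c=\sigma$. Therefore every subsequential limit of $\{f^k\}$ is the constant $\sigma$, and by normality $f^k\to\sigma$ normally on $\Delta$, as desired.
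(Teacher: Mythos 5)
The paper does not prove this statement; it is quoted as the classical Wolff--Denjoy theorem from Abate's book, and your argument is precisely the standard proof found there (easy direction from $\sigma\in\partial\Delta$; automorphisms handled by the parabolic/hyperbolic classification; for non-automorphisms, limit functions of $\{f^k\}$ shown to be constant via the strict Schwarz--Pick contraction and the $g=h\circ g$ trick; the constant pushed to the boundary by fixed-point-freeness and pinned to $\sigma$ by the invariant horocycles). The proof is correct and complete. One cosmetic remark: what legitimizes passing to the limit in $f^{k_{j+1}}=f^{k_{j+1}-k_j}\circ f^{k_j}$ is not that $g(\Delta)$ is relatively compact in $\Delta$ (it need not be), but that $g(K)$ is a compact subset of $\Delta$ for each compact $K\subset\Delta$, which follows from $g(\Delta)\subset\Delta$; the argument goes through unchanged.
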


Combining ideas from \cite[Lemma 2.1]{H:CJM95} and Theorem \ref{T:KMac}, we have the following non-tangential rigidity result.

\begin{thm}\label{T:Delta}
Let $f$ be a holomorphic self-map of $\Delta$ such that $f(\zeta)=\zeta+o(\zeta-1)$ as $\zeta\rightarrow 1$ non-tangentially. Then, either $f(\zeta)\equiv \zeta$, or $1$ is the Wolff point of $f$ and
$$f^k\rightarrow 1,\ \ \ \liminf_{r\rightarrow 1^-} \frac{\Re(f(r)-r)}{(1-r)^3}>0.$$
\end{thm}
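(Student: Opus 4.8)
The plan is to first read off the angular-derivative information at $1$ and then split according to whether $f$ has an interior fixed point. Rewriting $f(\zeta)=\zeta+o(\zeta-1)$ non-tangentially as $\frac{1-f(\zeta)}{1-\zeta}\to 1$, the Julia--Wolff--Carath\'eodory theorem identifies the unrestricted Julia quotient $\alpha=\liminf_{\zeta\to 1}\frac{1-|f(\zeta)|}{1-|\zeta|}$ with this angular derivative, so $\alpha=1$; in particular $1$ is a boundary fixed point and Julia's lemma gives $f(E(1,R))\subset E(1,R)$ for every $R>0$. It is convenient to transport everything to the right half-plane $H=\{w:\Re w>0\}$ by the Cayley map $C(\zeta)=\frac{1+\zeta}{1-\zeta}$, which sends $1$ to $\infty$. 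Setting $g=C\circ f\circ C^{-1}$ and using $\Re C(\zeta)=\frac{1-|\zeta|^2}{|1-\zeta|^2}$, the horocycle $E(1,R)$ becomes the half-plane $\{\Re w>1/R\}$, so the inclusion $f(E(1,R))\subset E(1,R)$ for all $R$ collapses to the single inequality $\Re(g(w)-w)\ge 0$ on $H$. This one inequality will drive all three conclusions.

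Suppose first that $f$ has an interior fixed point $z_0$. Then $w_0=C(z_0)\in H$ is an interior fixed point of $g$, so the nonnegative harmonic function $\Re(g(w)-w)$ attains its minimum value $0$ at the interior point $w_0$. By the minimum principle $\Re(g(w)-w)\equiv 0$, hence $g(w)-w$ is a purely imaginary constant, which the fixed point $w_0$ forces to vanish; thus $g$ is the identity and $f(\zeta)\equiv\zeta$. This is the first alternative, and it is exactly the mechanism behind Lemma 2.1 of \cite{H:CJM95}.

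Now suppose $f$ has no interior fixed point. Wolff's lemma furnishes a unique boundary point, the Wolff point $\sigma$, with $f(E(\sigma,R))\subset E(\sigma,R)$ for all $R$; since $1$ has just been shown to have this property, uniqueness gives $\sigma=1$, and the Wolff--Denjoy theorem yields $f^k\to 1$. For the lower bound on the $\liminf$, restrict to $\zeta=r\in(0,1)$, so that $w=C(r)=:x>0$ with $x\to+\infty$ and $1-r=\frac{2}{x+1}$. A direct computation gives $f(r)-r=\frac{2(g(x)-x)}{(g(x)+1)(x+1)}$, and writing $a=\Re(g(x)-x)\ge 0$ and $b=\Im(g(x)-x)$, one uses $g(x)+1=(x+1+a)+ib$ (note the matched imaginary parts) to obtain
$$\frac{\Re(f(r)-r)}{(1-r)^3}=\frac{(x+1)^2}{4}\cdot\frac{a(x+1)+a^2+b^2}{(x+1+a)^2+b^2}\ \ge\ 0,$$
so $\liminf_{r\to 1^-}\frac{\Re(f(r)-r)}{(1-r)^3}\ge 0$. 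Finally, since $f$ has no interior fixed point it is not the identity, so Theorem \ref{T:KMac} forbids this $\liminf$ from being $0$; hence it is strictly positive, which is the second alternative.

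The step I expect to require the most care is the passage from the purely non-tangential hypothesis to the unrestricted Julia quotient $\alpha=1$, since Julia's lemma (and therefore the master inequality $\Re(g(w)-w)\ge 0$) presupposes an honest boundary fixed point with finite Julia quotient; this is where the Julia--Wolff--Carath\'eodory theorem does the real work. Once that inequality is secured, the three remaining ingredients---the minimum principle in the fixed-point case, the uniqueness clause of Wolff's lemma for the identification of the Wolff point, and the sign computation feeding into Theorem \ref{T:KMac}---are routine.
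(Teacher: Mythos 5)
Your proof is correct and follows the same overall architecture as the paper's: derive the horocycle invariance $f(E(1,R))\subset E(1,R)$ from the non-tangential hypothesis via Julia's lemma, split on the existence of an interior fixed point, identify $1$ as the Wolff point and apply Wolff--Denjoy, and finish with a positivity argument for the radial liminf. Two local steps are carried out differently. For the interior-fixed-point case the paper does not go through Julia's lemma at all: it uses the Schwarz lemma to get $|f(\zeta)|<|\zeta|$, forms the negative harmonic function $\Re\bigl(\frac{f(\zeta)-\zeta}{f(\zeta)+\zeta}\bigr)$, and applies Hopf's lemma at the boundary point $1$ using only the radial estimate $f(r)-r=o(1-r)$; you instead read the conclusion off the single inequality $\Re(g(w)-w)\ge 0$ by the interior minimum principle, which is cleaner but leans on the Julia--Wolff--Carath\'eodory identification $\alpha=1$ (for which the elementary chain $\alpha\le \liminf_{r\to 1^-}\frac{1-|f(r)|}{1-r}\le \liminf_{r\to 1^-}\frac{|1-f(r)|}{1-r}=1$ already suffices). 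For the last step the paper essentially reproves Theorem \ref{T:KMac} inline, applying Hopf's lemma to $u(\zeta)=\Re\bigl(\frac{1+\zeta}{1-\zeta}-\frac{1+f(\zeta)}{1-f(\zeta)}\bigr)$, whereas you establish $\liminf\ge 0$ by the equivalent half-plane computation and then cite Theorem \ref{T:KMac} as a black box to exclude the value $0$; you correctly recognize that the nonnegativity bound must be secured before Theorem \ref{T:KMac} can be invoked. Both routes are valid, and your half-plane formulation makes the role of the Julia inequality more transparent at the cost of front-loading the boundary regularity input.
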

\begin{proof}
Set $\varphi(r):=f(r)-r$. Then $\varphi(r)=o(1-r)$ as $r\rightarrow 1^-$. Assume that $f(\zeta)\not\equiv \zeta$. First, we show that $f$ has no interior fixed points. Otherwise, without loss of generality, suppose that $f(0)=0$. Then, by the Schwarz lemma, $|f(\zeta)|<|\zeta|$ for $\zeta\neq 0$. Thus,
$$v(\zeta):=\Re \left(\frac{f(\zeta)-\zeta}{f(\zeta)+\zeta}\right)=\frac{|f(\zeta)|^2-|\zeta|^2}{|f(\zeta)+\zeta|^2}<0,\ \ \ \zeta\in \Delta\backslash\{0\}=:\Delta^\ast.$$
Note that $v$ is a negative harmonic function on $\Delta^\ast$, satisfying $v(r)=o(1-r)$ as $r\rightarrow 1^-$. In particular, $v$ takes the maximum $0$ at $\zeta=1$ and is $o(1-r)$ there radially. Thus $v\equiv 0$, by Hopf's lemma. But this would imply that $f(\zeta)\equiv \zeta$, a contradiction.

Now, by the Wolff-Denjoy theorem, $f^k\rightarrow \sigma(f)$, where $\sigma(f)$ is the Wolff point of $f$. Since $f(\zeta)=\zeta+o(\zeta-1)$ as $\zeta\rightarrow 1$ non-tangentially, Julia's lemma shows that $f(E(1,R))\subset E(1,R)$ for any $R>0$, i.e.,
$$\frac{|1-f(\zeta)|^2}{1-|f(\zeta)|^2}\le \frac{|1-\zeta|^2}{1-|\zeta|^2},\ \ \ \zeta\in \Delta.$$
Thus, $\sigma(f)=1$ and $f^k\rightarrow 1$. Set
$$u(\zeta):=\Re \left(\frac{1+\zeta}{1-\zeta}-\frac{1+f(\zeta)}{1-f(\zeta)}\right)\le 0,\ \ \ \zeta\in \Delta,$$
and
$$\psi(r):=\frac{\varphi(r)}{1-r}=s(r)+it(r),\ \ \ \tilde{r}:=\frac{1+r}{1-r}.$$
Then, $u(\zeta)$ is a negative harmonic function on $\Delta$ and
$$u(r)=\Re \left(\frac{1+r}{1-r}-\frac{1+f(r)}{1-f(r)}\right)=\tilde{r}-\Re \frac{\tilde{r}+\psi(r)}{1-\psi(r)}=\frac{(1+\tilde{r})(s^2+t^2-s)}{(1-s)^2+t^2}.$$
If $\liminf_{r\rightarrow 1^-} \frac{\Re(f(r)-r)}{(1-r)^3}=0$, then
$$\liminf_{r\rightarrow 1^-}\frac{-u(r)}{1-r}=\liminf_{r\rightarrow 1^-} \frac{2}{(1-r)^2}\frac{s-s^2-t^2}{(1-s)^2+t^2}\le \liminf_{r\rightarrow 1^-} \frac{s}{(1-r)^2}\frac{2(1-s)}{(1-s)^2+t^2}=0.$$
Thus $u\equiv 0$, by Hopf's lemma. But this would imply that $f(\zeta)\equiv \zeta$, a contradiction.
\end{proof}

As an immediate corollary, we have

\begin{cor}\label{C:Delta}
Let $f$ be a holomorphic self-map of $\Delta$ such that $f(\zeta)=\zeta+o(\zeta-1)$ as $\zeta\rightarrow 1$ non-tangentially. If $f$ has an interior fixed point then $f(\zeta)\equiv \zeta$.
\end{cor}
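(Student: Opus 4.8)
The plan is simply to appeal to Theorem \ref{T:Delta}, whose conclusion is already a dichotomy: either $f(\zeta)\equiv\zeta$, which is exactly the desired conclusion, or else $1$ is the Wolff point of $f$, with $f^k\rightarrow 1$ and $\liminf_{r\rightarrow 1^-}\Re(f(r)-r)/(1-r)^3>0$. Thus the only task is to rule out this second alternative under the extra hypothesis that $f$ has an interior fixed point. Once that is done, we are forced into the first branch and the corollary follows.

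To exclude the second branch I would invoke the very definition of the Wolff point. By Wolff's lemma, a Wolff point is produced precisely for those holomorphic self-maps of $\Delta$ having \emph{no} interior fixed point; equivalently, by the Wolff--Denjoy theorem, the iterates $f^k$ converge normally to a boundary point if and only if $f$ is fixed-point free in $\Delta$. Since by assumption $f(z_0)=z_0$ for some $z_0\in\Delta$, the map $f$ admits no Wolff point and its iterates cannot converge to $1$. Hence the second alternative of Theorem \ref{T:Delta} is vacuous in this situation, leaving only $f(\zeta)\equiv\zeta$.

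I do not anticipate any genuine obstacle here: all the analytic content—using the Schwarz lemma to eliminate interior fixed points in the nontrivial case, Julia's lemma to pin the Wolff point at $1$, and Hopf's lemma together with the harmonic-function estimate on $u$—has already been carried out in the proof of Theorem \ref{T:Delta}. The corollary is immediate; the single point worth stating explicitly is that ``having an interior fixed point'' and ``admitting a Wolff point'' are mutually exclusive, which is built directly into the statements of Wolff's lemma and the Wolff--Denjoy theorem.
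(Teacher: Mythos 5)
Your proposal is correct and matches the paper exactly: the paper states Corollary \ref{C:Delta} as an immediate consequence of Theorem \ref{T:Delta}, whose proof in fact establishes first that a nontrivial $f$ satisfying the hypothesis has no interior fixed point, so the second alternative (which presupposes a Wolff point and hence fixed-point freeness) is excluded exactly as you argue.
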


To treat more general domains in $\cv$, we need the following lemma. It is a generalized form of \cite[Lemma 8]{FR1}, with essentially the same proof.

\begin{lem}\label{L:order}
Let $D$ be a domain in $\cv$ with $C^2$-smooth boundary near $p\in \partial D$. Let $\phi:\Delta\rightarrow D$ be either a Riemann mapping or a universal cover with $\lim_{k\rightarrow \infty} \phi(\zeta_k)=p$ for some $\zeta_k\rightarrow 1$. Then $\phi$ extends to be bi-Lipschitz near $1\in \overline{\Delta}$.
\end{lem}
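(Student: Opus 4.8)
The plan is to localize the problem to a simply connected piece of $D$ near $p$ and then invoke the classical boundary regularity theory for conformal maps (Kellogg--Warschawski). First I would fix $\epsilon>0$ small enough that $\Omega:=D\cap B(p,\epsilon)$ is a Jordan domain whose boundary is the union of the free arc $\gamma:=\partial D\cap B(p,\epsilon)$ and a single circular arc of $\partial B(p,\epsilon)$, with $p$ an interior point of $\gamma$; by hypothesis $\gamma$ is $C^2$. In the Riemann mapping case $\phi^{-1}(\Omega)$ is connected and $\phi$ restricts to a biholomorphism onto $\Omega$. In the universal cover case, the restriction of a covering map over the simply connected set $\Omega$ is trivial on each connected component of $\phi^{-1}(\Omega)$, so $\phi$ maps every such component biholomorphically onto $\Omega$; choosing the component $U$ with $1\in\overline{U}$ reduces both cases to a single univalent map $\phi|_U\colon U\to\Omega$.

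Next I would pin down the boundary correspondence of $\phi|_U$ near $1$. Since $U$ is conformally a disk and $\Omega$ is a Jordan domain, the Jordan-domain boundary behaviour of conformal maps gives a continuous extension of $\phi|_U$ to the boundary near $1$, under which the part of $\partial U$ lying on $\partial\Delta$ corresponds to the free arc $\gamma$, while the part of $\partial U$ interior to $\Delta$ corresponds to the circular part of $\partial\Omega$ (because interior boundary points of $U$ are mapped into $D$). As $\phi(\zeta_k)\to p$ and $p$ is an interior point of $\gamma$, the point $1$ corresponds to $p$ and an entire sub-arc $I\subset\partial\Delta$ containing $1$ lies in $\partial U$ and is carried onto a neighbourhood of $p$ in $\gamma$. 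Thus, near $1$, $\phi|_U$ is a conformal map defined on a one-sided circular neighbourhood whose boundary arc $I$ is sent to the $C^2$ curve $\gamma$.

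The regularity is then immediate: a $C^2$ arc is $C^{1,\alpha}$ for every $\alpha\in(0,1)$, so the local Kellogg--Warschawski theorem yields that $(\phi|_U)'$ extends continuously (in fact $C^\alpha$) and without zeros to $I$. Hence $|\phi'|$ is bounded above and below on a one-sided neighbourhood of $1$ in $\overline{\Delta}$. A conformal map that is $C^1$ up to the boundary with non-vanishing derivative on such a quasiconvex neighbourhood is bi-Lipschitz, together with its inverse, which gives the asserted bi-Lipschitz extension of $\phi$ near $1$.

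I expect the regularity input to be routine and the universal cover reduction to be the main obstacle. The delicate points are verifying that a single component $U$ of $\phi^{-1}(\Omega)$ actually has $1$ in its closure, i.e.\ that the sheets of the cover do not accumulate at $p$ in an uncontrolled way, which uses precisely that $\gamma$ is a smooth arc rather than a singular boundary point (such as an isolated puncture, where the statement would fail), and correctly identifying the free sub-arc $I$. Once the problem is reduced to a univalent map carrying a circular boundary arc onto $\gamma$, the Kellogg--Warschawski estimate and the passage from a $C^1$ non-degenerate map to a bi-Lipschitz one are standard, in agreement with the remark that the proof is essentially that of \cite[Lemma 8]{FR1}.
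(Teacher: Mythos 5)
Your overall strategy --- localize to the Jordan domain $\Omega=D\cap B(p,\epsilon)$ with $C^2$ free arc $\gamma$, use triviality of the covering over the simply connected set $\Omega$ to reduce to a single univalent map $\phi|_U:U\rightarrow\Omega$, and then apply Carath\'eodory extension plus the local Kellogg--Warschawski theorem to get a $C^{1,\alpha}$ extension with non-vanishing derivative, hence a bi-Lipschitz map --- is the standard one, and it is essentially the argument of \cite[Lemma 8]{FR1} to which the paper defers (the paper gives no proof of its own). The Riemann-mapping case and the endgame (non-vanishing $C^\alpha$ derivative on a quasiconvex one-sided neighbourhood implies bi-Lipschitz, for the map and its inverse) are complete as you present them.

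The universal-cover case, however, contains a genuine gap at precisely the step you flag and then leave open: the existence of a single component $U$ of $\phi^{-1}(\Omega)$ with $\Delta\cap B(1,\delta)\subset U$. This cannot be extracted from smoothness of $\gamma$ alone, which is what you propose to use. Take $D$ a round annulus, $\phi:\Delta\rightarrow D$ its universal cover, and let $1$ be a fixed point on $\partial\Delta$ of the generating deck transformation. For \emph{every} $p\in\partial D$ there are sequences $\zeta_k\rightarrow 1$ (necessarily tangential) with $\phi(\zeta_k)\rightarrow p$, the boundary is real-analytic, and yet $\phi$ has no continuous extension at $1$: the sheets of $\phi^{-1}(\Omega)$ accumulate at $1$ without any of them containing a one-sided neighbourhood of it. So the obstruction is not the regularity of $\gamma$ but whether $1$ lies in the limit set of the deck group. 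To close the argument one must either use that the approach $\zeta_k\rightarrow 1$ is non-tangential (which is how the lemma is actually invoked in Theorem \ref{T:D}, and which rules out the annulus phenomenon), or define $1$ from the outset as the prime-end preimage of $p$ on a fixed sheet; one then still needs an argument (e.g.\ a harmonic-measure estimate on the arcs $\overline{U_k}\cap\partial\Delta$ of the sheets meeting a Stolz angle at $1$) showing that only one sheet can abut $1$ and that it contains a full one-sided disk there. Without this, the reduction to a univalent map, and hence the whole proof, does not go through.
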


Using Lemma \ref{L:order}, we see that any holomorphic self-map $f$ of $D$ with $f(z)=z+o(|z-p|)$ as $z\rightarrow p$ non-tangentially can be lifted to a holomorphic self-map $\tilde{f}$ of $\Delta$ with $\tilde{f}(\zeta)=\zeta+o(\zeta-1)$ as $\zeta\rightarrow 1$ non-tangentially. Thus, we can state Theorem \ref{T:Delta} and Corollary \ref{C:Delta} for more general domains in $\cv$ as follows.

\begin{thm}\label{T:D}
Let $D$ be a domain in $\cv$ with $C^2$ boundary near $p\in \partial D$, and $f$ be a holomorphic self-map of $D$. If $f(z)=z+o(|z-p|)$ as $z\rightarrow p$ non-tangentially, then either $f(z)\equiv z$, or
$$f^k\rightarrow p,\ \ \ \limsup_{z\rightarrow p\ \textup{non-tangentially}} \frac{|f(z)-z|}{|z-p|^3}>0.$$
\end{thm}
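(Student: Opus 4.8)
The plan is to reduce the statement on $D$ to the already-proven Theorem \ref{T:Delta} on $\Delta$ by transferring the map through the uniformizing map $\phi$ supplied by Lemma \ref{L:order}. First I would fix the setup: assume $f \not\equiv \operatorname{id}$, let $\phi\colon \Delta \to D$ be the Riemann map or universal cover with $\phi(\zeta_k) \to p$ for some $\zeta_k \to 1$, and by Lemma \ref{L:order} extend $\phi$ to a bi-Lipschitz homeomorphism of a neighborhood of $1$ in $\overline{\Delta}$ onto a neighborhood of $p$ in $\overline{D}$, with $\phi(1)=p$. As noted in the paragraph preceding the statement, the hypothesis $f(z)=z+o(|z-p|)$ non-tangentially lifts to a holomorphic self-map $\tilde f$ of $\Delta$ with $\tilde f(\zeta)=\zeta+o(\zeta-1)$ as $\zeta \to 1$ non-tangentially. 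Here I must be slightly careful about what ``lift'' means: when $\phi$ is a covering, $f \circ \phi$ lifts through $\phi$ to $\tilde f$ by the lifting criterion (so $\phi \circ \tilde f = f \circ \phi$), and one normalizes the lift so that $\tilde f$ fixes the boundary point $1$; when $\phi$ is a biholomorphism one simply sets $\tilde f = \phi^{-1}\circ f \circ \phi$.

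Next I would apply Theorem \ref{T:Delta} to $\tilde f$. Since $f \not\equiv \operatorname{id}$ and $\phi$ is (locally) invertible, $\tilde f \not\equiv \operatorname{id}$, so the theorem yields that $1$ is the Wolff point of $\tilde f$, that $\tilde f^{\,k} \to 1$, and that
$$
\liminf_{r\to 1^-} \frac{\Re\bigl(\tilde f(r)-r\bigr)}{(1-r)^3} > 0.
$$
Then I would transfer each conclusion back to $D$. The iterate statement $f^k \to p$ follows from $\tilde f^{\,k}\to 1$ together with the intertwining relation $\phi \circ \tilde f = f \circ \phi$ (inducting to get $\phi \circ \tilde f^{\,k} = f^k \circ \phi$) and the continuity of the boundary extension of $\phi$ at $1$, giving convergence of $f^k$ to $\phi(1)=p$ locally uniformly on $D$.

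The main obstacle, and the real content of the proof, is converting the positive radial $\liminf$ on the disk into the non-tangential $\limsup$ lower bound on $D$. For this I would use the bi-Lipschitz estimate from Lemma \ref{L:order}: there are constants $0<c\le C$ with $c\,|\zeta - 1| \le |\phi(\zeta) - p| \le C\,|\zeta-1|$ and $c\,|\zeta - \eta| \le |\phi(\zeta)-\phi(\eta)| \le C\,|\zeta-\eta|$ for $\zeta,\eta$ near $1$. Writing $z=\phi(\zeta)$ and $f(z)=\phi(\tilde f(\zeta))$, these comparisons give
$$
\frac{|f(z)-z|}{|z-p|^3} \;\ge\; \frac{c}{C^3}\,\frac{|\tilde f(\zeta)-\zeta|}{|\zeta-1|^3}.
$$
It remains to produce a non-tangential approach region in $\Delta$ along which the right-hand side stays bounded below. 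Taking the radial sequence $\zeta = r \to 1^-$ realizing the positive $\liminf$ for $\tilde f$ gives $|\tilde f(r)-r|/(1-r)^3 \ge \Re(\tilde f(r)-r)/(1-r)^3$ bounded below by a positive constant along that sequence; since $\phi$ maps this radial segment to a curve in $D$ that, by the bi-Lipschitz property, is non-tangential at $p$, the images $z=\phi(r)$ approach $p$ non-tangentially. Hence the non-tangential $\limsup$ of $|f(z)-z|/|z-p|^3$ over $D$ is positive, which is the claimed conclusion. The point that needs care is checking that the $\phi$-image of a radial segment is genuinely contained in a Stolz-type cone at $p$; this is exactly where the bi-Lipschitz control of $\phi$ near $1$ (rather than mere continuity) is essential.
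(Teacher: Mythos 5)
Your proposal is correct and follows exactly the route the paper intends: the paper gives no explicit proof of Theorem \ref{T:D}, merely asserting that Lemma \ref{L:order} lets one lift $f$ to a self-map $\tilde f$ of $\Delta$ satisfying the hypotheses of Theorem \ref{T:Delta}, and your argument fills in precisely that reduction. The details you supply --- normalizing the lift via the local inverse of $\phi$ near $1$, and using the bi-Lipschitz bounds to convert the positive radial $\liminf$ on $\Delta$ into a positive non-tangential $\limsup$ on $D$ along the $\phi$-image of the radius (which the bi-Lipschitz control keeps in a Stolz-type region at $p$) --- are the correct ones.
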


\begin{cor}\label{C:D}
Let $D$ be a domain in $\cv$ with $C^2$ boundary near $p\in \partial D$, and $f$ be a holomorphic self-map of $D$. If $f(z_0)=z_0$ for some $z_0\in D$ and $f(z)=z+o(|z-p|)$ as $z\rightarrow p$ non-tangentially, then $f(z)\equiv z$.
\end{cor}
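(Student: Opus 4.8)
The plan is to obtain this as an immediate consequence of the dichotomy in Theorem~\ref{T:D}, in exact parallel with the way Corollary~\ref{C:Delta} is deduced from Theorem~\ref{T:Delta}. Suppose $f(z_0)=z_0$ for some interior point $z_0\in D$. Applying Theorem~\ref{T:D} to $f$ places us in one of two alternatives: either $f(z)\equiv z$, which is the desired conclusion, or $f^k\to p$ together with the strict positivity of the non-tangential $\limsup$ of $\frac{|f(z)-z|}{|z-p|^3}$. The whole task therefore reduces to excluding the second alternative by means of the interior fixed point.

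First I would record that an interior fixed point is inherited by every iterate, so $f^k(z_0)=z_0$ for all $k$. Next I would observe that the convergence $f^k\to p$ furnished by Theorem~\ref{T:D} is in particular pointwise on $D$: writing $\phi$ for the cover from Lemma~\ref{L:order} and $\tilde f$ for the lift with $\phi\circ\tilde f=f\circ\phi$, one has $\phi\circ\tilde f^k=f^k\circ\phi$, and the normal convergence $\tilde f^k\to 1$ supplied by the Wolff--Denjoy theorem, combined with the continuity of $\phi$ at $1$ with $\phi(1)=p$ from Lemma~\ref{L:order}, yields $f^k(z)\to p$ for each $z\in D$. Evaluating at $z_0$ would then give $p=\lim_k f^k(z_0)=z_0$, which is impossible since $z_0\in D$ while $p\in\partial D$. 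Hence the second alternative of Theorem~\ref{T:D} cannot occur, and we are forced into $f(z)\equiv z$.

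Since the substantive analytic work --- the lift through Lemma~\ref{L:order}, Julia's lemma, and the applications of Hopf's lemma --- has already been carried out in establishing Theorems~\ref{T:Delta} and~\ref{T:D}, no serious obstacle remains. The one point that genuinely needs to be pinned down is the mode of convergence encoded in the notation ``$f^k\to p$'': one must confirm it is at least pointwise at interior points, since it is precisely the value at the fixed point $z_0$ that produces the contradiction. I note in passing that one could instead choose the lift $\tilde f$ so that it fixes a preimage of $z_0$ and invoke Corollary~\ref{C:Delta} directly; but that route requires reconciling this lift with the boundary-normalized lift used in the non-tangential hypothesis, so passing through the already-established Theorem~\ref{T:D} is the cleaner path.
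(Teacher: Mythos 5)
Your proposal is correct and takes essentially the same route as the paper, which presents Corollary \ref{C:D} as an immediate consequence of Theorem \ref{T:D}: the interior fixed point is preserved by every iterate, so the alternative $f^k\to p\in\partial D$ is impossible and $f(z)\equiv z$ must hold. Your additional care in confirming that ``$f^k\to p$'' is at least pointwise at interior points (via the lift $\tilde f$ and the continuity of $\phi$ at $1$) correctly fills in the one step the paper leaves unstated, and your remark that lifting the fixed point directly to $\Delta$ is problematic for a non-simply-connected $D$ is well taken.
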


Note that Corollary \ref{C:D} is a local and non-tangential version of Theorem \ref{T:HD}.

\section{Strongly linearly convex domains}\label{S:O}

A bounded domain $\Omega$ in $\cv^n$, $n\ge 2$, is \textit{strongly linearly convex} if it has a $C^2$-smooth boundary and admits a $C^2$-defining function $r:\cv^n\rightarrow \rv$, whose real Hessian is positive definite on the complex tangent space of $\partial \Omega$, i.e.,
$$\sum_{j,k=1}^n \frac{\partial^2 r}{\partial z_j\partial \bar{z}_k}(p)v_j\bar{v}_k>\left|\sum_{j,k=1}^n \frac{\partial^2 r}{\partial z_j\partial z_k}(p)v_jv_k\right|,$$
for all $p\in \partial \Omega$ and non-zero $v=(v_1,\cdots,v_n)\in T_p^{1,0}\partial \Omega$ (see e.g. \cite{Ai, APS, M, Zn}). The notion of strong linear convexity is weaker than the usual strong convexity but stronger than strong pseudoconvexity.

For any bounded domain $\Omega$ and $z_1,z_2\in \Omega$, denote by $k_\Omega(z_1,z_2)$ the \textit{Kobayashi distance} on $\Omega$. A holomorphic map $\varphi:\Delta\rightarrow \Omega$ is called a \textit{complex geodesic} of $\Omega$ (\cite{V}), if it is an isometry between $k_\Delta$ and $k_\Omega$, i.e. $k_\Omega(\varphi(\zeta_1),\varphi(\zeta_2))= k_\Delta(\zeta_1,\zeta_2)$ for all $\zeta_1,\zeta_2\in \Delta$.

The existence of complex geodesics with prescribed interior data on strongly convex domains was established by Lempert (\cite{L:BSMF}). Subsequent work by various authors show that similar results also hold with prescribed boundary data on strongly linearly convex domains (see e.g. \cite{A:book, CHL, H:SNS94, HW, L2}). We collect some known facts about complex geodesics on strongly linearly convex domains in the following theorem.

\begin{thm}\label{T:geodesic}
Let $\Omega$ be a bounded strongly linearly convex domain with $C^k$ boundary, $k\ge 3$. Then,\\
\textup{1)} for every $q\in \Omega$ and $p\in \partial \Omega$, there exists a unique complex geodesic $\varphi$ with $\varphi(0)=q$ and $\varphi(1)=p$, and $\varphi\in C^{k-2}(\overline{\Delta})$;\\
\textup{2)} for every complex geodesic $\varphi:\Delta\rightarrow \Omega$, there exists $\psi:\Omega\rightarrow \Delta$ such that $\psi\circ \varphi=\textup{id}_\Delta$ and $\psi\in C^{k-2}(\overline{\Omega})$;\\
\textup{3)} every complex geodesic on $\Omega$ is transversal to $\partial \Omega$;\\
\textup{4)} if $\zeta\rightarrow 1$ non-tangentially on $\Delta$, then $\varphi(\zeta)\rightarrow p$ non-tangentially on $\Omega$.
\end{thm}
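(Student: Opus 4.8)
Since the theorem only assembles facts already available in the literature, the plan is mostly to route each item to the right source, giving a genuine argument only for part 4), which follows from the earlier parts. For parts 1) and 2) I would begin from Lempert's theory (\cite{L:BSMF}): on strongly convex domains it produces the complex geodesic with prescribed interior data, the associated holomorphic left inverse $\psi$ (the Lempert projection device), and boundary regularity. The upgrade of existence, uniqueness, of the left inverse, and of the sharp regularity $\varphi\in C^{k-2}(\overline{\Delta})$ and $\psi\in C^{k-2}(\overline{\Omega})$ from strong convexity to the strictly weaker strong linear convexity is exactly the content of \cite{A:book, CHL, H:SNS94, HW, L2}; for these two items I would simply cite, taking care that a $C^k$ boundary costs two derivatives up to the closure. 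Prescribing $\varphi(0)=q$ and $\varphi(1)=p$ is the interior/boundary datum handled there.

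For part 3) I would observe that, by part 1), $\varphi$ extends to be $C^1$ on $\overline{\Delta}$ with $\varphi(1)=p$, so transversality is the assertion that $\varphi'(1)$ has a nonzero component in the real normal direction to $\partial\Omega$ at $p$. Strong linear convexity rules out tangency: a geodesic tangent to $\partial\Omega$ at $p$ cannot remain an isometry up to the boundary. This is established in the works cited for part 1), and I would invoke it from there.

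Part 4) is the item I would actually prove, from 1) and 3). Write $\delta_\Omega(z)$ for the Euclidean distance from $z$ to $\partial\Omega$, so $\delta_\Delta(\zeta)=1-|\zeta|$. The elementary observation is that inside a fixed Stolz region at $1$ one has $1-|\zeta|\le|1-\zeta|\le M(1-|\zeta|)$, hence $|1-\zeta|\asymp 1-|\zeta|$ there. By part 1), $\varphi\in C^1(\overline{\Delta})$ with $\varphi'(1)\neq 0$, so $|\varphi(\zeta)-p|\asymp|\zeta-1|$ near $1$; in particular $\delta_\Omega(\varphi(\zeta))\le|\varphi(\zeta)-p|\asymp 1-|\zeta|$. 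For the matching lower bound I would use part 3): since $\varphi$ meets $\partial\Omega$ at a nonzero angle and maps all of $\Delta$ into $\Omega$, the normal component of $\varphi(\zeta)-p$ is comparable to $1-|\zeta|$, giving $\delta_\Omega(\varphi(\zeta))\ge c\,(1-|\zeta|)$ uniformly over the Stolz region. Combining the two bounds yields $\delta_\Omega(\varphi(\zeta))\asymp|\varphi(\zeta)-p|$, which is precisely non-tangential approach of $\varphi(\zeta)$ to $p$ in $\Omega$.

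The main obstacle is not part 4), whose proof is the short comparison above, but securing parts 1)--3) at the stated generality: the boundary regularity and especially the transversality must be cited for strongly linearly convex domains rather than the classical strongly convex case, and one must match the $C^{k-2}$ bookkeeping to the references. Within part 4), the only genuinely non-radial point is the uniform lower bound $\delta_\Omega(\varphi(\zeta))\ge c\,(1-|\zeta|)$ over the whole Stolz region; this is exactly where transversality is needed, and once it is in hand the argument closes.
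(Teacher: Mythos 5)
The paper offers no proof of this theorem at all --- it is explicitly presented as a collection of known facts with citations to \cite{A:book, CHL, H:SNS94, HW, L2} --- so your citation-based treatment of parts 1)--3) matches the paper exactly, and the short comparison argument you supply for part 4) (namely $\delta_\Omega(\varphi(\zeta))\asymp|\varphi(\zeta)-p|$ on a Stolz region, via $|1-\zeta|\asymp 1-|\zeta|$ there, $C^1$-regularity with $\varphi'(1)\neq 0$, and transversality) is a correct supplement. The one point you elide in 4) is that transversality by itself only gives $\Re\langle\varphi'(1),\nu_p\rangle\neq 0$, whereas the uniform lower bound on the normal component of $\varphi(\zeta)-p$ over the full Stolz angle also uses that $\varphi(\partial\Delta)\subset\partial\Omega$ forces $\Re\langle i\varphi'(1),\nu_p\rangle=0$ (so no ``rotated'' tangential term can cancel the normal one); this is standard and follows from the boundary regularity in 1).
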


The non-tangential version of the Burns-Krantz rigidity theorem for strongly linearly convex domains below obviously follows from Theorem \ref{T:BZZ}. It can also be proven following the original arguments of Burns and Krantz in \cite{BK}, using Theorem \ref{T:geodesic} and Theorem \ref{T:Delta} instead of the Burns-Krants rigidity on $\Delta$.

\begin{thm}\label{T:O1}
Let $\Omega$ be a bounded strongly linearly convex domain with $C^3$ boundary, $p\in \partial \Omega$, and $f$ be a holomorphic self-map of $\Omega$. If $f(z)=z+o(|z-p|^3)$ as $z\rightarrow p$ non-tangentially, then $f(z)\equiv z$.
\end{thm}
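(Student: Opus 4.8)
The plan is to reduce the $n$-dimensional rigidity statement to the one-dimensional Theorem~\ref{T:Delta} (equivalently its Wolff-point formulation) by slicing $\Omega$ with complex geodesics, exactly as in the Burns–Krantz scheme. First I would fix the boundary point $p$ and, for each interior point $q\in\Omega$, invoke Theorem~\ref{T:geodesic}(1) to produce the unique complex geodesic $\varphi=\varphi_q:\Delta\to\Omega$ with $\varphi(0)=q$ and $\varphi(1)=p$, together with the associated left inverse $\psi$ from Theorem~\ref{T:geodesic}(2) satisfying $\psi\circ\varphi=\mathrm{id}_\Delta$. The composition $g:=\psi\circ f\circ\varphi:\Delta\to\Delta$ is then a holomorphic self-map of the disk, and the whole point is to transfer the hypothesis $f(z)=z+o(|z-p|^3)$ into a decay statement for $g$ near $\zeta=1$.

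The central step is the boundary estimate on $g$. Using that $\varphi\in C^{k-2}(\overline\Delta)$ and $\psi\in C^{k-2}(\overline\Omega)$ (so both are at least $C^1$ up to the boundary, since $k\ge 3$), together with transversality from Theorem~\ref{T:geodesic}(3) guaranteeing that $\varphi$ is non-degenerate at $\zeta=1$, I would show $|\varphi(\zeta)-p|\asymp|\zeta-1|$ as $\zeta\to1$ non-tangentially; part (4) of Theorem~\ref{T:geodesic} ensures the approach stays non-tangential in $\Omega$ so that the hypothesis on $f$ actually applies. Expanding $g(\zeta)-\zeta=\psi(f(\varphi(\zeta)))-\psi(\varphi(\zeta))$ and using the $C^1$ bound on $\psi$ gives $|g(\zeta)-\zeta|\le C\,|f(\varphi(\zeta))-\varphi(\zeta)|=o(|\varphi(\zeta)-p|^3)=o(|\zeta-1|^3)$, so $g(\zeta)=\zeta+o(\zeta-1)$ non-tangentially, and in fact with the sharper $o(|\zeta-1|^3)$ decay. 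By Theorem~\ref{T:Delta} the dichotomy forces $g\equiv\mathrm{id}_\Delta$: the alternative would require $\liminf_{r\to1^-}\Re(g(r)-r)/(1-r)^3>0$, which is incompatible with the cubic decay just established. Hence $g=\mathrm{id}_\Delta$, which yields $\psi(f(\varphi(\zeta)))=\zeta=\psi(\varphi(\zeta))$ for all $\zeta$, so in particular $f(\varphi(0))=f(q)$ lies on the geodesic and $\psi(f(q))=0=\psi(q)$.

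To conclude $f(q)=q$ for every $q$, I would exploit that $q=\varphi_q(0)$ is the unique preimage with $\psi(\varphi_q(0))=0$ only relative to that one geodesic, so a single equation $\psi(f(q))=0$ is not by itself enough to pin down $f(q)$. The standard remedy is to vary the geodesic: for the fixed target $p$ and two distinct interior points $q_1,q_2$ one gets two geodesics through $p$, and the rigidity $g\equiv\mathrm{id}$ along each shows $f$ maps each geodesic to itself fixing the interior base point. Since through $q$ and $p$ the geodesic is unique and $f$ preserves it pointwise at $0$, one obtains $f\circ\varphi_q=\varphi_q$ on all of $\Delta$ (as $g=\mathrm{id}$), hence $f(q)=\varphi_q(0)=q$. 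As $q\in\Omega$ is arbitrary, $f\equiv\mathrm{id}_\Omega$.

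The main obstacle I anticipate is the boundary estimate for the composition, specifically verifying that $|\varphi(\zeta)-p|$ and $|\zeta-1|$ are comparable with matching cubic order and that the non-tangential approach is preserved in both directions. This rests entirely on the regularity and transversality packaged in Theorem~\ref{T:geodesic}, parts (3) and (4), so the work is in carefully differentiating $\psi\circ f\circ\varphi$ at the boundary rather than in any new complex-analytic input; once $g(\zeta)=\zeta+o(\zeta-1)$ with cubic decay is in hand, Theorem~\ref{T:Delta} does the rest.
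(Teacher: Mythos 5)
Your reduction to the disk is sound as far as it goes: composing with the geodesic $\varphi_q$ and its Lempert left inverse $\psi_q$, and using the $C^1$ regularity, transversality and part (4) of Theorem~\ref{T:geodesic}, does give $g_q(\zeta):=\psi_q(f(\varphi_q(\zeta)))=\zeta+o(|\zeta-1|^3)$ as $\zeta\to1$ non-tangentially, and Theorem~\ref{T:Delta} then forces $g_q\equiv\mathrm{id}_\Delta$ since the cubic decay kills the $\liminf$ alternative. (For the record, the paper does not actually run this argument: it deduces the statement in one line from Theorem~\ref{T:BZZ}, since a strongly linearly convex domain is strongly pseudoconvex at every boundary point, and only mentions the geodesic route as an alternative, without details.)

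The gap is in your last step. From $g_q\equiv\mathrm{id}_\Delta$ you only get $\psi_q(f(q))=\psi_q(q)=0$, which confines $f(q)$ to the fiber $\psi_q^{-1}(0)$, a complex hypersurface through $q$ --- and you say so yourself. But the remedy you then offer is circular: ``$f$ preserves it pointwise at $0$'' is exactly the assertion $f(q)=q$ that you are trying to prove, and the uniqueness in Theorem~\ref{T:geodesic}(1) cannot be invoked for the candidate geodesic $f\circ\varphi_q$ because its value at $0$ is the unknown $f(q)$, not a prescribed point. ``Varying the geodesic'' does not help as stated either: all your geodesics pass through $p$, and the one through a different base point $q'$ only yields $\psi_{q'}(f(q'))=0$, which says nothing about $f(q)$. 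To close the argument you would need a genuine further input: for instance, show via monotonicity of the Kobayashi distance that $f\circ\varphi_q$ is itself a complex geodesic (as the paper does in the proof of Theorem~\ref{T:O}) and then prove that a complex geodesic $h$ with $\psi_q\circ h=\mathrm{id}_\Delta$ and $h(1)=p$ must coincide with $\varphi_q$ --- a property of the Lempert projection that is true but requires proof --- or else follow the original Burns--Krantz two-step scheme in which the components of $f-\mathrm{id}$ transverse to the geodesic are handled by a separate Hopf-lemma/strong-pseudoconvexity argument. Note that even in the easier situation of Theorem~\ref{T:O}, where the interior fixed point makes the uniqueness argument legitimate and gives $f\circ\varphi\equiv\varphi$ along one geodesic, the paper still needs Theorem~\ref{T:Huang} to conclude $f\equiv\mathrm{id}$; your proposal skips the analogous work entirely.
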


The next result is the non-tangential version of \cite[Theorem 2.2]{H:CJM95}. The proof of \cite[Theorem 2.2]{H:CJM95} can be carried over almost verbatim, and we omit the details.

\begin{thm}\label{T:Huang}
Let $\Omega$ be a bounded domain. Let $\varphi:\Delta\rightarrow \Omega$ be a proper holomorphic map which is $\epsilon$-H\"{o}lder continuous up to the boundary with $\varphi(1)=p\in \partial\Omega$ and transversal to $\partial \Omega$. Suppose that there exists a defining function $\rho$ of $\Omega$ which is smooth over $\varphi(\overline{\Delta})$ with $d\rho\neq 0$ on $\varphi(\partial \Delta)$ and that there exists a positive number $\mu\le 1$ such that $-(-\rho\circ \varphi)^\mu$ is subharmonic. Then, if $f$ is a holomorphic self-map of $\Omega$ fixing $\varphi(\Delta)$, and if $f(z)=z+o(|z-p|^{2/\mu\epsilon})$ as $z\rightarrow p$ non-tangentially, then $f(z)\equiv z$.
\end{thm}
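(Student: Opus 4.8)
The plan is to run the proof of \cite[Theorem 2.2]{H:CJM95} essentially unchanged, the only modification being that the radial Burns--Krantz rigidity on $\Delta$ invoked there is replaced by its non-tangential counterpart, Theorem \ref{T:Delta} (more precisely, by the $\liminf$ form of Hopf's lemma established inside its proof). Everything then reduces to a one-variable statement on the parametrizing disk $\Delta$, and the whole point is to check that the non-tangential hypothesis on $f$ is exactly calibrated to feed that statement.

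First I would transfer the problem to $\Delta$ through $\varphi$. Since $f$ fixes $\varphi(\Delta)$ pointwise, $f$ agrees with the identity along the disk and can differ from it only in the directions transverse to $\varphi(\Delta)$; this transverse defect is measured by a nonpositive subharmonic function $v$ on $\Delta$ assembled from $\rho\circ f$ and $\rho\circ\varphi$ (comparing the normal derivatives of $\rho\circ f$ and $\rho$ along $\varphi(\Delta)$). The inclusion $f(\Omega)\subset\Omega$ together with $f|_{\varphi(\Delta)}=\mathrm{id}$ forces the correct sign $v\le 0$ by a Julia/Hopf-type boundary inequality, and it is precisely here that the hypothesis that $-(-\rho\circ\varphi)^\mu$ be subharmonic is used: raising to the power $\mu$ is the device that makes the transverse comparison genuinely subharmonic on $\Delta$, at the cost of a factor $1/\mu$ in the relevant orders of vanishing.

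Next I would convert the boundary behaviour of $f$ at $p$ into a decay statement for $v$ at $\zeta=1$. By transversality of $\varphi$ one has $-\rho(\varphi(\zeta))\asymp 1-|\zeta|$ along non-tangential approach, and non-tangential approach to $1$ in $\Delta$ then corresponds to non-tangential approach to $p$ in $\Omega$; the $\epsilon$-H\"older continuity of $\varphi$ gives in addition $|\varphi(\zeta)-p|\lesssim|\zeta-1|^\epsilon$. Pushing $f(z)=z+o(|z-p|^{2/(\mu\epsilon)})$ through these two estimates and through the $\mu$-th power, the losses combine so that the hypothesis becomes exactly the radial decay $v(\zeta)=o(1-r)$ required by Hopf's lemma. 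That lemma, in the form used to prove Theorem \ref{T:Delta}, then gives $v\equiv 0$, i.e. the transverse contraction of $f$ along $\varphi(\Delta)$ is trivial; the remaining steps of \cite{H:CJM95} upgrade this to $f\equiv z$.

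The step I expect to be the main obstacle is the exponent bookkeeping in the previous paragraph: one must verify that the H\"older exponent $\epsilon$ and the loss incurred in passing to the $\mu$-th power combine with the order of contact at $p$ so that the prescribed order $2/(\mu\epsilon)$ produces \emph{precisely} the threshold decay needed to run Hopf's lemma on $\Delta$. As this is the same computation carried out in \cite{H:CJM95}, with Theorem \ref{T:Delta} substituted for the radial rigidity at the single place where the latter is invoked, no genuinely new difficulty arises and the details may be omitted.
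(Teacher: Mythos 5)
Your proposal matches the paper exactly: the paper offers no written proof of Theorem \ref{T:Huang}, stating only that the proof of \cite[Theorem 2.2]{H:CJM95} ``can be carried over almost verbatim,'' which is precisely your plan of rerunning Huang's subharmonic-comparison argument and noting that the Hopf-lemma step only ever uses the radial decay, so the non-tangential hypothesis suffices. Your sketch of Huang's mechanism and the exponent bookkeeping is consistent with this, so the proposal is correct and takes the same route.
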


For any $z_0\in \Omega$, $p\in \partial \Omega$ and $R>0$, the \textit{small horosphere} $E_{z_0}(p,R)$ and the \textit{big horosphere} $F_{z_0}(p,R)$ of center $p$, pole $z_0$ and radius $R$ are defined by
$$\begin{aligned}
E_{z_0}(p,R)&=\left\{z\in \Omega:\ \limsup_{w\rightarrow p} [k_\Omega(z,w)-k_\Omega(z_0,w)]<\frac{1}{2}\log R\right\},\\
F_{z_0}(p,R)&=\left\{z\in \Omega:\ \liminf_{w\rightarrow p} [k_\Omega(z,w)-k_\Omega(z_0,w)]<\frac{1}{2}\log R\right\}.
\end{aligned}$$

We need the following two results.

\begin{lem}\cite[Proposition 1.2]{H:CJM95}\label{L:1.2}
Let $\Omega$ be a bounded domain with $C^1$ boundary, $p\in \partial \Omega$, and $f$ be a holomorphic self-map of $\Omega$. If $f(z)=z+o(|z-p|)$ as $z\rightarrow p$ non-tangentially, then $f(E_{z_0}(p,R))\subset F_{z_0}(p,R)$ for all $z_0\in \Omega$ and $R>0$.
\end{lem}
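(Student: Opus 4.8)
The plan is to run a Julia-type argument based on the distance-decreasing property of the Kobayashi distance, exploiting the asymmetry between the small horosphere (defined by a $\limsup$) and the big horosphere (defined by a $\liminf$). Fix $z\in E_{z_0}(p,R)$ and set $L:=\limsup_{w\rightarrow p}[k_\Omega(z,w)-k_\Omega(z_0,w)]$, so that $L<\frac12\log R$ by definition. To conclude $f(z)\in F_{z_0}(p,R)$ it suffices to produce a \emph{single} sequence $w_k\rightarrow p$ along which $k_\Omega(f(z),f(w_k))-k_\Omega(z_0,f(w_k))$ stays below $\frac12\log R$ in the limit, since then the $\liminf$ over all approaches to $p$ is bounded by that value.

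First I would choose $w_k\rightarrow p$ along the inner normal, which is non-tangential; by hypothesis $f(w_k)=w_k+o(|w_k-p|)$, so $f(w_k)\rightarrow p$ as well. Because $L$ is a $\limsup$ over \emph{all} $w\rightarrow p$, for this particular sequence one automatically has $\limsup_k[k_\Omega(z,w_k)-k_\Omega(z_0,w_k)]\le L$. Applying the contraction $k_\Omega(f(z),f(w_k))\le k_\Omega(z,w_k)$ and inserting $k_\Omega(z_0,w_k)$, I would estimate
$$k_\Omega(f(z),f(w_k))-k_\Omega(z_0,f(w_k))\le [k_\Omega(z,w_k)-k_\Omega(z_0,w_k)]+[k_\Omega(z_0,w_k)-k_\Omega(z_0,f(w_k))],$$
where the last bracket is controlled by $|k_\Omega(z_0,w_k)-k_\Omega(z_0,f(w_k))|\le k_\Omega(w_k,f(w_k))$.

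The crux is therefore the estimate $k_\Omega(w_k,f(w_k))\rightarrow 0$, and this is the step I expect to be the main obstacle. Here I would use only the elementary upper bound for the Kobayashi metric obtained by inscribing a ball, namely $\kappa_\Omega(\zeta;v)\le |v|/\delta(\zeta)$, where $\delta$ denotes the Euclidean distance to $\partial\Omega$. Non-tangential approach gives $\delta(w_k)\ge c|w_k-p|$ for some $c>0$, while $|f(w_k)-w_k|=o(|w_k-p|)$ keeps the straight segment from $w_k$ to $f(w_k)$ inside $\Omega$ at distance at least $\frac{c}{2}|w_k-p|$ from the boundary for large $k$. Integrating the metric along this segment then yields $k_\Omega(w_k,f(w_k))\le |f(w_k)-w_k|/(\tfrac{c}{2}|w_k-p|)=o(1)$, an estimate that needs only the $C^1$ regularity of $\partial\Omega$ near $p$ to make sense of the non-tangential cone and the distance function.

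Combining the two estimates gives $\limsup_k[k_\Omega(f(z),f(w_k))-k_\Omega(z_0,f(w_k))]\le L<\frac12\log R$, and since $f(w_k)\rightarrow p$ this forces $\liminf_{w\rightarrow p}[k_\Omega(f(z),w)-k_\Omega(z_0,w)]<\frac12\log R$, i.e. $f(z)\in F_{z_0}(p,R)$. I emphasize that the whole argument hinges on the $\limsup$/$\liminf$ mismatch: the small-horosphere hypothesis bounds the relevant quantity along every sequence, so I am free to test the big-horosphere condition for $f(z)$ along the convenient normal sequence, and this is exactly why one can only expect $f(E_{z_0}(p,R))\subset F_{z_0}(p,R)$ rather than a small-to-small inclusion.
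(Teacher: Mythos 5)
Your argument is correct and is essentially the standard proof of this lemma (the paper itself gives no proof, deferring to Huang's Proposition 1.2, whose argument is exactly this): contract with $f$, compare the two base points via the triangle inequality, and kill the error term by showing $k_\Omega(w_k,f(w_k))\rightarrow 0$ along a normal sequence using the inscribed-ball bound on the Kobayashi metric, which is precisely where the non-tangential hypothesis suffices and why only the big horosphere appears in the conclusion. No gaps.
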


\begin{lem}\cite[Lemma 2.2]{BZ:jets}\label{L:jets}
Let $\Omega$ be a bounded domain with $C^1$ boundary, $p\in \partial \Omega$, and $f:\Omega\rightarrow \cv^m$ be holomorphic. If $df_p$ is the non-tangential differential of $f$ at $p$, then $df_z\rightarrow df_p$ as $z\rightarrow p$ non-tangentially.
\end{lem}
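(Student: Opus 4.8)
The plan is to obtain the convergence of the derivatives by differentiating the first-order non-tangential expansion of $f$ with Cauchy's estimate, applied on balls whose radii are proportional to $|z-p|$. Writing $\Omega\subset\cv^n$ and $f=(f_1,\dots,f_m)$, unravel the hypothesis as $f(w)=f(p)+df_p(w-p)+g(w)$, where $df_p$ is the (complex-linear) non-tangential differential and $g:\Omega\to\cv^m$ is holomorphic with $g(w)=o(|w-p|)$ as $w\to p$ non-tangentially. Since $df_w=df_p+dg_w$, it suffices to prove that $dg_z\to 0$ as $z\to p$ non-tangentially, that is, that each entry $\partial g_i/\partial z_j(z)$ tends to $0$.

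The key geometric input is that a non-tangential ball stays non-tangential. Because $\partial\Omega$ is $C^1$, the non-tangential approach regions are comparable to $\Gamma_\beta(p)=\{z\in\Omega:\ |z-p|<\beta\,\delta(z)\}$, $\beta>1$, where $\delta(z):=\mathrm{dist}(z,\partial\Omega)$, and it is enough to prove the convergence within each such region. For $z\in\Gamma_\beta(p)$ one has $\delta(z)\ge c\,|z-p|$ with $c:=1/\beta\in(0,1)$, so with $r:=\tfrac{c}{2}|z-p|$ the ball $B(z,r)$ lies in $\Omega$. Since $\delta$ is $1$-Lipschitz, every $w\in B(z,r)$ satisfies $\delta(w)\ge\delta(z)-r\ge\tfrac{c}{2}|z-p|$ and $(1-\tfrac{c}{2})|z-p|\le|w-p|\le(1+\tfrac{c}{2})|z-p|$, whence $|w-p|\le\alpha\,\delta(w)$ with $\alpha:=1+2/c$. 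Thus $B(z,r)\subset\Gamma_\alpha(p)$ and, on it, $|w-p|\sim|z-p|$ uniformly in $z$.

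With this in place the estimate is immediate. Given $\eta>0$, the relation $g(w)=o(|w-p|)$ inside $\Gamma_\alpha(p)$ furnishes $\rho>0$ with $|g(w)|\le\eta\,|w-p|$ whenever $w\in\Gamma_\alpha(p)$ and $|w-p|<\rho$. For $z\in\Gamma_\beta(p)$ with $(1+\tfrac{c}{2})|z-p|<\rho$, every $w\in B(z,r)$ then lies in $\Gamma_\alpha(p)$ and obeys $|g(w)|\le\eta(1+\tfrac{c}{2})|z-p|$, so Cauchy's estimate on $B(z,r)$ gives, with a dimensional constant $C_n$,
$$\left|\frac{\partial g_i}{\partial z_j}(z)\right|\le\frac{C_n}{r}\sup_{B(z,r)}|g_i|\le\frac{C_n}{(c/2)|z-p|}\,\eta\,(1+\tfrac{c}{2})|z-p|=\frac{C_n(2+c)}{c}\,\eta.$$
As $\eta>0$ is arbitrary, $\partial g_i/\partial z_j(z)\to 0$ as $z\to p$ in $\Gamma_\beta(p)$, and since $\beta>1$ was arbitrary this yields $df_z\to df_p$ non-tangentially.

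The hard part will be the bookkeeping in the geometric step: one must check that from the prescribed cone $\Gamma_\beta(p)$ one can pass to a single fixed wider cone $\Gamma_\alpha(p)$ that contains all the Cauchy balls $B(z,r)$ and keeps them inside $\Omega$, uniformly as $z\to p$. The $C^1$ regularity of $\partial\Omega$ enters only here, through the comparability $\delta(z)\sim|z-p|$ that defines the non-tangential regions; once the inclusion $B(z,r)\subset\Gamma_\alpha(p)\subset\Omega$ is secured, the remainder is a routine Cauchy estimate on the error term $g$.
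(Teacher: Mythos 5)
The paper gives no proof of this lemma; it is quoted from Bracci--Zaitsev \cite{BZ:jets}, so there is no internal argument to compare yours against. Your proof is correct and is the standard (and essentially the cited) argument: the one genuinely necessary idea is the cone-widening step, namely that for $z$ in a region $\Gamma_\beta(p)=\{z\in\Omega:\ |z-p|<\beta\,\delta(z)\}$ the Cauchy ball $B\bigl(z,\tfrac{c}{2}|z-p|\bigr)$ with $c=1/\beta$ lies in $\Omega$ and in a single wider region $\Gamma_\alpha(p)$ with $\alpha=1+2/c$, on which the hypothesis $g(w)=o(|w-p|)$ for the remainder $g=f-f(p)-df_p(\cdot-p)$ is still available; you carry this out correctly, and it is the only place the $C^1$ regularity (through $\delta(z)\sim|z-p|$ on cones) enters. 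The only point worth flagging is that this step tacitly uses that the non-tangential differential is assumed with respect to every aperture, so that the $o(|w-p|)$ bound holds on the widened region $\Gamma_\alpha(p)$ and not just on the original cone; that is the standard convention for non-tangential limits, and with it the Cauchy estimate yields $dg_z\to 0$ uniformly on each $\Gamma_\beta(p)$, hence $df_z\to df_p$ non-tangentially.
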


The following corollary is the non-tangential version of \cite[Corollary 1.3]{H:CJM95}.

\begin{cor}\label{C:1.3}
Let $\Omega$ be a bounded domain with $C^1$ boundary, $p\in \partial \Omega$, and $f$ be a holomorphic self-map of $\Omega$. Suppose that there exist a point $z_0\in \Omega$ and $R>0$ such that $E_{z_0}(p,R)\neq \emptyset$ and $\overline{F_{z_0}(p,R)}\cap \partial \Omega=\{p\}$. If $f(z)=z+o(|z-p|)$ as $z\rightarrow p$ non-tangentially and $\{f^k\}$ is compactly divergent, then $f^k\rightarrow p$.
\end{cor}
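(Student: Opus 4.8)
The plan is to carry out a Wolff--Denjoy argument based on the horosphere inclusion of Lemma \ref{L:1.2}, to pin down the limit of the iterates by means of the geometric hypothesis $\overline{F_{z_0}(p,R)}\cap\partial\Omega=\{p\}$, and then to upgrade from a single horosphere to all of $\Omega$ by the identity theorem.

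First I would set up the normal-family machinery. Since $\Omega$ is bounded, $\{f^k\}$ is normal; let $g$ be the locally uniform limit of any convergent subsequence $\{f^{k_j}\}$. Compact divergence forces $g(\Omega)\subseteq\partial\Omega$. As every subsequential limit will be seen to be the same constant $p$, the whole sequence converges, $f^k\to p$; so the task reduces to showing $g\equiv p$ for each such $g$. For this it suffices, by the hypothesis $\overline{F_{z_0}(p,R)}\cap\partial\Omega=\{p\}$, to prove that $g$ maps the nonempty open set $E_{z_0}(p,R)$ into $\overline{F_{z_0}(p,R)}$: then $g(E_{z_0}(p,R))\subseteq\overline{F_{z_0}(p,R)}\cap\partial\Omega=\{p\}$, and since $g$ is holomorphic and constant on a nonempty open set, the identity theorem gives $g\equiv p$ on $\Omega$.

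The core of the proof is a forward-invariance statement for the big horosphere, namely $f\big(F_{z_0}(p,R)\big)\subseteq F_{z_0}(p,R)$. Writing $\Phi(z)=\liminf_{w\to p}\big[k_\Omega(z,w)-k_\Omega(z_0,w)\big]$, this is equivalent to $\Phi(f(z))\le\Phi(z)$. To prove it I would pick a near-minimizing sequence $w_m\to p$ for $\Phi(z)$, use the distance-decreasing property $k_\Omega(f(z),f(w_m))\le k_\Omega(z,w_m)$, observe that $f(w_m)\to p$ with $k_\Omega(w_m,f(w_m))\to 0$ (so that $k_\Omega(z_0,f(w_m))=k_\Omega(z_0,w_m)+o(1)$ by the triangle inequality), and test the liminf defining $\Phi(f(z))$ along the admissible sequence $f(w_m)\to p$. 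Granting this, fix $w_0\in E_{z_0}(p,R)$; Lemma \ref{L:1.2} gives $f(w_0)\in F_{z_0}(p,R)$, and forward invariance yields $f^k(w_0)\in F_{z_0}(p,R)$ for all $k\ge 1$, whence $g(w_0)=\lim_j f^{k_j}(w_0)\in\overline{F_{z_0}(p,R)}$, as required.

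The main obstacle lies in the claim $k_\Omega(w_m,f(w_m))\to 0$. This is immediate when $w_m\to p$ non-tangentially, since then the displacement $|f(w_m)-w_m|=o(|w_m-p|)$ is small compared with the boundary distance $\delta(w_m)\asymp|w_m-p|$; but the liminf defining $\Phi(z)$ is taken over unrestricted approach, and a tangentially approaching minimizing sequence need not be controlled by the non-tangential hypothesis. I would resolve this by showing that the Kobayashi horofunction is essentially realized along non-tangential directions for bounded $C^1$ domains --- tangential approach being Kobayashi-expensive --- so that near-minimizing sequences may be taken non-tangential; Lemma \ref{L:jets}, which guarantees $df_z\to df_p=\mathrm{id}$ non-tangentially, is the tool for converting this into the required distance estimate. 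This interplay between the unrestricted liminf in the horosphere and the purely non-tangential control on $f$ is precisely the delicate point that distinguishes the present statement from its unrestricted counterpart.
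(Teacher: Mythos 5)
Your overall frame (normality of $\{f^k\}$, reduction to showing that every subsequential limit $g$ is the constant $p$, and the use of $\overline{F_{z_0}(p,R)}\cap\partial\Omega=\{p\}$ together with the identity theorem) agrees with the paper. The difference, and the problem, lies in how you get $f^k(w_0)\in F_{z_0}(p,R)$ for all $k$. You route this through the forward-invariance claim $f\bigl(F_{z_0}(p,R)\bigr)\subseteq F_{z_0}(p,R)$, and your own analysis correctly identifies where that claim breaks: the $\liminf$ defining the big horosphere is taken over unrestricted approach $w\to p$, whereas the hypothesis on $f$ controls only non-tangential behaviour. Your proposed repair --- that for a bounded $C^1$ domain the near-minimizing sequences for the horofunction can always be taken non-tangential, so that tangential approach is ``Kobayashi-expensive'' --- is not proved, is not among the tools available in the paper, and cannot be expected at this level of generality: for a merely bounded $C^1$ domain there are no boundary estimates on $k_\Omega$ of the kind that would let you replace a tangential minimizing sequence by a non-tangential one without changing the value of the $\liminf$. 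As written, the argument therefore has a genuine gap at its central step.

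The gap is avoidable, and the paper's route is simpler: do not try to push points through $F_{z_0}(p,R)$ at all; instead, propagate the hypothesis to the iterates. By Lemma \ref{L:jets}, $df_z\to df_p=\mathrm{id}$ as $z\to p$ non-tangentially, so $f$ maps non-tangential approach regions at $p$ into non-tangential approach regions at $p$; hence if $z\to p$ non-tangentially then $f(z)\to p$ non-tangentially and $f^2(z)=f(z)+o(|f(z)-p|)=z+o(|z-p|)$, and inductively $f^k(z)=z+o(|z-p|)$ non-tangentially for every $k$. Each iterate thus satisfies the hypotheses of Lemma \ref{L:1.2} in its own right, giving $f^k(E_{z_0}(p,R))\subset F_{z_0}(p,R)$ directly for every $k$, which is exactly what your final step needs. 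This sidesteps both the invariance of the big horosphere and any discussion of tangential minimizing sequences.
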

\begin{proof}
Let $g$ be a limit point of $\{f^k\}$. Then, $g$ is a holomorphic map from $\Omega$ to $\partial \Omega$ since $\{f^k\}$ is compactly divergent. Since $df_p=id$, by Lemma \ref{L:jets}, if $z\rightarrow p$ non-tangentially then $f(z)\rightarrow p$ non-tangentially. Thus, for each $k$, $f^k$ satisfies the the hypotheses in Lemma \ref{L:1.2}. Thus, $f^k(E_z(p,R))\subset F_z(p,R)$ for every $z\in \Omega$ and $R>0$. Hence, $g(E_{z_0}(p,R))\subset \overline{F_{z_0}(p,R)}\cap \partial \Omega=\{p\}$. This implies that $g\equiv p$. Therefore, $f^k\rightarrow p$.
\end{proof}

We need two more results.

\begin{thm}\cite[Theorem 1.7]{A:horo}\label{T:horo}
Let $\Omega$ be a bounded strongly pseudoconvex domain with $C^2$ boundary. Then for every $z_0\in \Omega$, $p\in \partial \Omega$ and $R>0$, $\overline{F_{z_0}(p,R)}\cap \partial \Omega=\{p\}$.
\end{thm}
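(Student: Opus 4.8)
The plan is to reformulate the statement in terms of the horofunction
$$h(z):=\liminf_{w\rightarrow p}\,[k_\Omega(z,w)-k_\Omega(z_0,w)],\qquad z\in\Omega,$$
so that $F_{z_0}(p,R)=\{z\in\Omega:\ h(z)<\tfrac12\log R\}$ is exactly a sublevel set of $h$. With this notation the theorem splits into two parts: that $p\in\overline{F_{z_0}(p,R)}$, and that no other boundary point lies in the closure. The second part is the substance, and I would prove it by showing that $h(z)\rightarrow+\infty$ as $z\rightarrow q$ for every $q\in\partial\Omega\setminus\{p\}$; then $h(z)>\tfrac12\log R$ on a whole one-sided neighborhood $\Omega\cap B(q,r)$, so that neighborhood avoids $F_{z_0}(p,R)$ and $q\notin\overline{F_{z_0}(p,R)}$. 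The model to keep in mind is the ball, where $h$ is computed explicitly from the formula for the hyperbolic distance and equals (for $z_0$ the center) $\tfrac12\log\frac{|1-\langle z,p\rangle|^2}{1-|z|^2}$; this tends to $+\infty$ as $z\to q\ne p$ because $\langle q,p\rangle\ne1$, and to $-\infty$ along the radius toward $p$. The general argument should reproduce exactly these two asymptotics.

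The engine is the sharp boundary estimate of the Kobayashi distance on strongly pseudoconvex domains due to Forstneri\v{c}--Rosay and Balogh--Bonk. Writing $\delta(z)$ for the Euclidean distance of $z$ to $\partial\Omega$, these furnish a two-sided bound of the form
$$k_\Omega(z,w)=\tfrac12\log\tfrac1{\delta(z)}+\tfrac12\log\tfrac1{\delta(w)}+\tfrac12\log N(z,w)+O(1),$$
where the interaction term $N(z,w)$ (built from a local holomorphic support/peak function, with $N(z,w)\asymp|1-\langle z,w\rangle|^2$ in the ball model) stays bounded away from $0$ whenever $z$ and $w$ remain near two distinct boundary points, and degenerates to $0$ only as $z$ and $w$ approach a common boundary point. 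I would first specialize this with $w\to p$ and $z_0$ fixed: since $\delta(z_0)$ and $N(z_0,w)$ are then bounded above and below, the estimate collapses to $k_\Omega(z_0,w)=\tfrac12\log\tfrac1{\delta(w)}+O(1)$.

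Subtracting, the term $\tfrac12\log\tfrac1{\delta(w)}$ cancels and one gets $k_\Omega(z,w)-k_\Omega(z_0,w)=\tfrac12\log\tfrac1{\delta(z)}+\tfrac12\log N(z,w)+O(1)$. For $z$ near $q\ne p$ and $w$ near $p$ the interaction term $\tfrac12\log N(z,w)$ is bounded below (this is the only place $q\ne p$ is used), while $\tfrac12\log\tfrac1{\delta(z)}\to+\infty$ as $z\to q$; hence the bracket is $\ge\tfrac12\log\tfrac1{\delta(z)}-C$ uniformly for $w$ near $p$, and taking $\liminf_{w\to p}$ yields $h(z)\to+\infty$, as required. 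For the remaining inclusion $p\in\overline{F_{z_0}(p,R)}$, I would run the same estimate along the inner normal sequence $z_t\to p$: writing $N(z_t,p)=\lim_{w\to p}N(z_t,w)$, the bracket converges, as $w\to p$, to a quantity comparable to $\tfrac12\log\tfrac{N(z_t,p)}{\delta(z_t)}$, which $\to-\infty$ along the normal because $N(z_t,p)$ vanishes to higher order than $\delta(z_t)$ (as in the ball computation). Thus $z_t\in F_{z_0}(p,R)$ for $t$ close to the boundary and $z_t\to p$.

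The main obstacle is the lower bound in the Balogh--Bonk estimate, specifically the correctly behaved interaction term $N(z,w)$ together with its \emph{uniformity} as both arguments approach $\partial\Omega$. This is where strong pseudoconvexity is essential: it supplies the local holomorphic peak/support functions (\`a la Fornaess--Henkin) that yield the upper bound via competitor maps into $\Delta$, while the matching lower bound comes from the Forstneri\v{c}--Rosay comparison of $\Omega$ with a ball near each boundary point up to bounded Kobayashi distortion. Assembling these local comparisons into a single estimate valid when $z\to q$ and $w\to p$ \emph{simultaneously}, and verifying that $N(z,w)$ does not degenerate while $q\ne p$, is the delicate step; once it is in hand, the rest of the argument is routine bookkeeping with logarithms.
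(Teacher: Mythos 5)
This statement is quoted from Abate's paper (Math.\ Z.\ \textbf{198} (1988), Theorem 1.7) and is not proved in the present paper, so there is no internal proof to compare against; judged on its own, your argument is essentially Abate's original one and is sound. Abate derives the result from exactly the two estimates you isolate: an upper bound $k_\Omega(z_0,w)\le C-\tfrac12\log\delta(w)$ and a two-point lower bound $k_\Omega(z,w)\ge -\tfrac12\log\delta(z)-\tfrac12\log\delta(w)+C'$ valid uniformly when $z$ and $w$ approach \emph{distinct} boundary points (a Forstneri\v{c}--Rosay-type localization, predating the sharper Balogh--Bonk formula you invoke); subtracting gives $h(z)\ge -\tfrac12\log\delta(z)+C''\to+\infty$ as $z\to q\ne p$, which is your main step. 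Two small remarks: the lower bound must hold for \emph{all} $w\to p$, not just non-tangential $w$, since $F_{z_0}(p,R)$ is defined with an unrestricted $\liminf$ --- the uniform estimates do provide this, but it is worth stating; and for the inclusion $p\in\overline{F_{z_0}(p,R)}$ it suffices to note that $E_{z_0}(p,R)\subset F_{z_0}(p,R)$ and that the small horosphere already accumulates at $p$ (by the radial computation you sketch), so the full asymptotics of $N(z_t,p)$ are not really needed. The one genuinely delicate point you correctly flag --- uniformity of the interaction term as both arguments go to the boundary --- is precisely the content of the cited localization estimates, so citing them is legitimate rather than a gap.
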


\begin{thm}\cite[Theorem 1]{H:SNS94}
Let $\Omega$ be a bounded contractible strongly pseudoconvex domain with $C^3$ boundary, and $f$ be a holomorphic self-map of $\Omega$. Then, $f$ has no interior fixed points if and only if the iterates of $f$ converges uniformly on compacta to a boundary point, the Wolff point of $f$.
\end{thm}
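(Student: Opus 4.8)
The plan is to prove the two implications separately, with the forward implication following the template already established in the proof of Corollary \ref{C:1.3}. The reverse implication is immediate: if $f^k\to p$ uniformly on compacta with $p\in\partial\Omega$, then an interior fixed point $z_0$ would give $f^k(z_0)=z_0$ for all $k$, forcing $p=z_0\in\Omega$, a contradiction; so $f$ can have no interior fixed point. Thus the real content is the forward implication, and throughout I assume $f$ has no interior fixed point and aim to produce a boundary point $p$ with $f^k\to p$.

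First I would record that, since $\Omega$ is bounded, it is taut, so the iterate family $\{f^k\}$ is normal and obeys the standard dichotomy: either $\{f^k\}$ is compactly divergent, or some subsequence $f^{k_j}$ converges uniformly on compacta to a holomorphic self-map $\rho$ of $\Omega$ whose image $M=\rho(\Omega)$ is a connected holomorphic retract on which $f$ restricts to an automorphism. I expect the main obstacle to be ruling out the second alternative, i.e. showing that the absence of an interior fixed point forces compact divergence. This is exactly where both hypotheses on $\Omega$ enter: contractibility guarantees that the retract $M$ is itself contractible, while strong pseudoconvexity guarantees that $\partial\Omega$ contains no germ of a positive-dimensional analytic set. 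If $\dim M=0$, then $M$ is a single interior point fixed by $f$, contradicting the hypothesis; if $\dim M\ge 1$, then $f|_M$ is a fixed-point-free automorphism of the contractible manifold $M$, and one must analyze the iterates of $f|_M$, whose boundary cluster set lies in $\partial\Omega$ and hence cannot be positive-dimensional, to drive the orbit to a single boundary point and return to convergence. Making this topological--convexity step rigorous is the heart of the matter and the part I expect to require the most care.

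Granting compact divergence, I would next construct the Wolff point. Fix $z_0\in\Omega$ and set $w_k=f^k(z_0)$; by compact divergence $w_k\to\partial\Omega$, and after passing to a subsequence $w_{k_j}\to p\in\partial\Omega$. The Kobayashi horofunctions $z\mapsto k_\Omega(z,w_{k_j})-k_\Omega(z_0,w_{k_j})$ are $k_\Omega$-Lipschitz and locally uniformly bounded (by $k_\Omega(z,z_0)$ via the triangle inequality), so by Arzel\`a--Ascoli they subconverge to a horofunction whose sublevel sets are the horospheres $E_{z_0}(p,R)$. The distance-decreasing property of $f$ for $k_\Omega$ then yields, for each $f^k$ and every $R>0$, the invariance $f^k(E_{z_0}(p,R))\subset F_{z_0}(p,R)$. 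This is the genuine several-variable Wolff lemma for the iterates, playing the role that Lemma \ref{L:1.2} plays in Corollary \ref{C:1.3}; there the inclusion came from the rigidity hypothesis $f(z)=z+o(|z-p|)$, which is unavailable here and must be replaced by this intrinsic construction.

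Finally I would conclude exactly as in Corollary \ref{C:1.3}. Let $g$ be any limit point of $\{f^k\}$; by compact divergence $g$ maps $\Omega$ into $\partial\Omega$. The horosphere invariance gives $g(E_{z_0}(p,R))\subset\overline{F_{z_0}(p,R)}\cap\partial\Omega$, which equals $\{p\}$ by Theorem \ref{T:horo}. Since $E_{z_0}(p,R)$ is nonempty and open, $g\equiv p$, and as this holds for every subsequential limit, normality upgrades it to $f^k\to p$ uniformly on compacta. The point $p$ is then the common center of the invariant horospheres, i.e. the Wolff point of $f$, which completes the forward implication.
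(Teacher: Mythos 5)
First, note that the paper offers no proof of this statement: it is quoted verbatim from Huang \cite{H:SNS94} (and only its corollary for strongly linearly convex domains is used later), so there is nothing internal to compare your argument against; I can only judge the proposal on its own terms. Your overall architecture is the standard Abate--Huang one (dichotomy for the iterates, then horospheres plus Theorem \ref{T:horo} to pin down a single Wolff point), and the final horosphere step and the trivial reverse implication are fine in outline.

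The genuine gap is exactly at the step you yourself flag as ``the heart of the matter'': you never actually rule out the non--compactly--divergent alternative when the limit retract $M=\rho(\Omega)$ has positive dimension, and the mechanism you gesture at for doing so is the wrong one. If $f^{k_j}\to\rho$ with $\rho$ a retraction onto $M\subset\Omega$, then the orbit of $f|_M$ is \emph{not} compactly divergent --- a subsequence of the iterates restricted to $M$ converges to $\mathrm{id}_M$ --- so there is no ``boundary cluster set in $\partial\Omega$'' to which strong pseudoconvexity of the boundary could be applied; that tool lives entirely in the compactly divergent branch (where it shows subsequential limits are constants). The correct mechanism is group-theoretic and topological: the convergence of $f^{k_{j+1}-k_j}|_M$ to $\mathrm{id}_M$ forces the closed subgroup of $\aut(M)$ generated by $f|_M$ to be compact, and a compact Lie group acting on a taut \emph{contractible} manifold must have a fixed point (e.g.\ via the Euler characteristic of the fixed-point set of the torus component, $\chi(M)=1\neq 0$, together with Smith-type arguments for the finite quotient). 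This is precisely where contractibility enters, and omitting it is not a formality: for a fixed-point-free rotation of an annulus the iterates are not compactly divergent, so the dichotomy alone cannot carry the argument. A secondary but real error: ``since $\Omega$ is bounded, it is taut'' is false in general (tautness forces pseudoconvexity); here tautness comes from strong pseudoconvexity, not boundedness. Finally, in the horosphere step you should note that passing from $f$ to $f^k$ in the Wolff-type inclusion, and the nonemptiness of $E_{z_0}(p,R)$, both require the quantitative horosphere estimates of \cite{A:horo} rather than only the triangle inequality as written.
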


Since a strongly linear convex domain is always contractible (\cite[Theorem 2.4.2]{APS}), we have the following

\begin{cor}\label{C:WD}
Let $\Omega$ be a bounded strongly linearly convex domain with $C^3$ boundary, and $f$ be a holomorphic self-map of $\Omega$. Then, $f$ has no interior fixed points if and only if the iterates of $f$ converges uniformly on compacta to a boundary point, the Wolff point of $f$.
\end{cor}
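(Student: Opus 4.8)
The plan is to recognize that Corollary \ref{C:WD} is an immediate specialization of the Wolff point theorem of \cite{H:SNS94} stated just above, so the only task is to verify that a bounded strongly linearly convex domain $\Omega$ with $C^3$ boundary satisfies the hypotheses of that theorem, namely that it is bounded, contractible, strongly pseudoconvex, and $C^3$-smooth.

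First I would record the two structural facts about strongly linearly convex domains that bridge the gap. The boundedness and $C^3$-regularity are assumed outright. For strong pseudoconvexity, I would appeal to the remark made immediately after the definition of strong linear convexity in Section \ref{S:O}: the positivity of the real Hessian on the complex tangent space dominates, in particular, the Levi form, so strong linear convexity is stronger than strong pseudoconvexity; hence $\Omega$ is a bounded strongly pseudoconvex domain with $C^3$ boundary. For contractibility, I would cite \cite[Theorem 2.4.2]{APS}, which guarantees that every bounded strongly linearly convex domain is contractible.

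Combining these observations, $\Omega$ is a bounded contractible strongly pseudoconvex domain with $C^3$ boundary, so the preceding theorem of \cite{H:SNS94} applies verbatim: $f$ has no interior fixed points if and only if its iterates converge uniformly on compacta to a boundary point, the Wolff point of $f$. This yields the corollary. There is essentially no obstacle here beyond invoking the two cited inclusions at the stated regularity; the content of the result lies entirely in the quoted theorem, and the corollary merely transports it from the contractible strongly pseudoconvex setting to the strongly linearly convex setting where it will be used in the sequel.
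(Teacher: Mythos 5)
Your proposal is correct and matches the paper's own argument: the paper derives the corollary from the quoted theorem of Huang by noting that strong linear convexity implies strong pseudoconvexity and citing \cite[Theorem 2.4.2]{APS} for contractibility, exactly as you do.
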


The following is the non-tangential version of \cite[Theorem 2.6]{H:CJM95}.

\begin{thm}\label{T:O}
Let $\Omega$ be a bounded strongly linearly convex domain with $C^3$ boundary, $p\in \partial \Omega$, and $f$ be a holomorphic self-map of $\Omega$. If $f(z)=z+o(|z-p|^2)$ as $z\rightarrow p$ non-tangentially, then either $f(z)\equiv z$, or $p$ is the Wolff point of $f$ and
$$f^k\rightarrow p,\ \ \ \limsup_{z\rightarrow p\ \textup{non-tangentially}} \frac{|f(z)-z|}{|z-p|^3}>0.$$
\end{thm}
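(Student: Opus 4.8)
The plan is to reduce Theorem \ref{T:O} to the one-dimensional result Theorem \ref{T:Delta} via complex geodesics, following the strategy behind Theorem \ref{T:O1} but tracking the exact vanishing order so that the $o(|z-p|^2)$ hypothesis (rather than $o(|z-p|^3)$) is exploited correctly. First I would assume $f(z)\not\equiv z$ and show that $f$ has no interior fixed point. If $f(z_0)=z_0$, then $f$ would fix a whole complex geodesic: by Theorem \ref{T:geodesic}(1) take the complex geodesic $\varphi$ with $\varphi(0)=z_0$ and $\varphi(1)=p$, and using the retraction $\psi$ from Theorem \ref{T:geodesic}(2) the lifted map $\tilde f:=\psi\circ f\circ\varphi$ is a holomorphic self-map of $\Delta$. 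The key computation is that the hypothesis $f(z)=z+o(|z-p|^2)$ together with the transversality and $C^{k-2}$-regularity of $\varphi$ (Theorem \ref{T:geodesic}(3), and Theorem \ref{T:geodesic}(4) to convert non-tangential approach in $\Delta$ to non-tangential approach in $\Omega$) forces $\tilde f(\zeta)=\zeta+o(\zeta-1)$ non-tangentially; then Corollary \ref{C:Delta} gives $\tilde f=\mathrm{id}$, so $f$ fixes $\varphi(\Delta)$ pointwise, and an application of Theorem \ref{T:Huang} (with the subharmonicity exponent $\mu$ and Hölder exponent $\epsilon$ for strongly linearly convex domains chosen so that $2/\mu\epsilon\le 2$) yields $f(z)\equiv z$, contradicting our assumption.

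Having ruled out interior fixed points, I would invoke Corollary \ref{C:WD} so that the iterates $f^k$ converge uniformly on compacta to the Wolff point of $f$. To identify that Wolff point with $p$ and to obtain $f^k\to p$, I would verify the hypotheses of Corollary \ref{C:1.3}: since $\Omega$ is strongly linearly convex it is in particular strongly pseudoconvex, so Theorem \ref{T:horo} gives $\overline{F_{z_0}(p,R)}\cap\partial\Omega=\{p\}$ for all $z_0,R$, and the small horosphere is nonempty for suitable $R$; because $\{f^k\}$ is compactly divergent (no interior fixed point), Corollary \ref{C:1.3} then yields $f^k\to p$, which forces the Wolff point to be $p$.

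It remains to establish the quantitative lower bound $\limsup_{z\to p}|f(z)-z|/|z-p|^3>0$ non-tangentially. Here I would argue by contradiction: if the $\limsup$ were zero, then $f(z)=z+o(|z-p|^3)$ as $z\to p$ non-tangentially, and Theorem \ref{T:O1} would immediately give $f(z)\equiv z$, contradicting our standing assumption. Thus the bound holds, completing the proof.

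I expect the main obstacle to be the fixed-point step, specifically the careful bookkeeping of orders of vanishing when lifting $f$ through the geodesic $\varphi$ and the retraction $\psi$. One must check that the $o(|z-p|^2)$ decay of $f(z)-z$ in $\Omega$ transfers to the correct decay of $\tilde f(\zeta)-\zeta$ in $\Delta$; this uses the bi-Lipschitz and transversality behavior of $\varphi$ near $\zeta=1$ (Theorem \ref{T:geodesic}(3),(4)) together with the regularity of $\psi$, and the matching of the Hölder/subharmonicity exponents in Theorem \ref{T:Huang} so that the required exponent $2/\mu\epsilon$ does not exceed $2$. The horosphere and Wolff-point steps, by contrast, are essentially direct citations of the collected lemmas and should present no real difficulty.
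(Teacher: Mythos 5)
Your proposal follows the paper's proof essentially step for step: ruling out interior fixed points by lifting along the Lempert geodesic through the fixed point and $p$, applying Corollary \ref{C:Delta} and then Theorem \ref{T:Huang} with $\epsilon=\mu=1$; combining Corollary \ref{C:1.3}, Theorem \ref{T:horo} and Corollary \ref{C:WD} to get $f^k\rightarrow p$; and citing Theorem \ref{T:O1} for the order-three lower bound. The only point you elide is the implication from $\psi\circ f\circ\varphi=\textup{id}_\Delta$ to $f$ fixing $\varphi(\Delta)$ pointwise, which is not automatic and in the paper goes through the monotonicity of the Kobayashi distance (showing $f\circ\varphi$ is itself a complex geodesic) together with the uniqueness statement in Theorem \ref{T:geodesic}.
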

\begin{proof}
Assume that $f(z)\not\equiv z$. First, we show that $f$ has no interior fixed points. Suppose that $f(q)=q$, $q\in \Omega$. Let $\varphi$ and $\psi$ be the maps given in Theorem \ref{T:geodesic}. Set $g=\psi\circ f\circ \varphi$. Then, as $\zeta\rightarrow 1$ non-tangentially, we have
$$g(\zeta)=\psi(\varphi(\zeta)+o(|\varphi(\zeta)-p|^2))=\psi(\varphi(\zeta))+o(|\varphi(\zeta)-\varphi(1)|^2)=\zeta+o(|\zeta-1|^2).$$
Noting that $g(0)=0$, we get $g(\zeta)\equiv \zeta$ by Corollary \ref{C:Delta}.

From the monotonicity of the Kobayashi distance, it follows that $f\circ \varphi$ is also a complex geodesic. By the uniqueness property, we have $f\circ \varphi\equiv \varphi$, i.e. $f$ fixes $\varphi(\Delta)$. From Theorem \ref{T:Huang} with $\epsilon,\mu=1$, it follows that $f(z)\equiv z$. This is a contradiction.

Since $f$ has no interior fixed points, it follows from Corollary \ref{C:1.3}, Theorem \ref{T:horo} and Corollary \ref{C:WD} that $f^k\rightarrow p$. Thus, since $f(z)\not\equiv z$,
$$\limsup_{z\rightarrow p\ \textup{non-tangentially}} \frac{|f(z)-z|}{|z-p|^3}>0,$$
by Theorem \ref{T:O1}.
\end{proof}

As an immediate corollary, we have the following non-tangential version of Theorem \ref{T:HO}.

\begin{cor}\label{C:O}
Let $\Omega$ be a bounded strongly linearly convex domain with $C^3$ boundary, $p\in \partial \Omega$, and $f$ be a holomorphic self-map of $\Omega$. If $f(z_0)=z_0$ for some $z_0\in \Omega$ and $f(z)=z+o(|z-p|^2)$ as $z\rightarrow p$ non-tangentially, then $f(z)\equiv z$.
\end{cor}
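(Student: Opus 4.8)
The plan is to derive Corollary~\ref{C:O} directly from Theorem~\ref{T:O}, which I would treat as already proven. The corollary adds the hypothesis that $f$ has an interior fixed point $z_0\in\Omega$, so the strategy is simply to show that the ``second alternative'' in the dichotomy of Theorem~\ref{T:O} cannot occur, leaving only $f(z)\equiv z$.

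First I would apply Theorem~\ref{T:O} to $f$ under the given hypothesis $f(z)=z+o(|z-p|^2)$ as $z\to p$ non-tangentially. This yields that either $f(z)\equiv z$, which is the desired conclusion, or else $p$ is the Wolff point of $f$ and in particular $f^k\to p$ uniformly on compacta. So it suffices to rule out the second case. The key observation is that the existence of the interior fixed point $z_0$ is incompatible with $f^k\to p\in\partial\Omega$: by Corollary~\ref{C:WD}, a holomorphic self-map of a bounded strongly linearly convex domain with $C^3$ boundary has iterates converging to a boundary Wolff point \emph{if and only if} $f$ has no interior fixed points. Since $f(z_0)=z_0$ exhibits an interior fixed point, the iterates $\{f^k\}$ cannot converge to a boundary point, so the second alternative of Theorem~\ref{T:O} is excluded. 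Therefore $f(z)\equiv z$.

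Alternatively, and perhaps more transparently, one can bypass the full strength of Theorem~\ref{T:O} and argue as in the first paragraph of its proof: lifting via the complex geodesic $\varphi$ through $z_0$ (using Theorem~\ref{T:geodesic}) and $\psi$, one sets $g=\psi\circ f\circ\varphi$, checks that $g(\zeta)=\zeta+o(|\zeta-1|^2)$ non-tangentially with $g(0)=0$, invokes Corollary~\ref{C:Delta} to get $g\equiv\mathrm{id}$, deduces that $f$ fixes $\varphi(\Delta)$, and concludes $f(z)\equiv z$ from Theorem~\ref{T:Huang} with $\epsilon=\mu=1$. This is really the same chain of reasoning that already appears inside the proof of Theorem~\ref{T:O}.

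I do not anticipate a genuine obstacle here, since the corollary is a formal consequence of the theorem together with the Wolff--Denjoy-type dichotomy in Corollary~\ref{C:WD}; the only point requiring minor care is making sure the interior fixed point hypothesis is correctly matched against the ``no interior fixed points'' condition governing boundary convergence of iterates, so that the second alternative is cleanly eliminated.
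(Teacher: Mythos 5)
Your proposal is correct and matches the paper's intent: the paper states Corollary~\ref{C:O} as an immediate consequence of Theorem~\ref{T:O}, and ruling out the second alternative via the interior fixed point (through Corollary~\ref{C:WD}, or equivalently by noting that the first paragraph of the proof of Theorem~\ref{T:O} already handles the fixed-point case directly) is exactly the intended argument.
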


\section{Some other domains}\label{S:other}

Let $\Omega$ be a bounded domain in $\cv^n$, $n\ge 1$. Recall that an element $f\in \aut(\Omega)$ is called \textit{elliptic} if the closed subgroup of $\aut(\Omega)$ generated by $f$ is compact. The following is the non-tangential version of \cite[Theorem 1]{H:PJM93}. Since the original proof uses only the non-tangential limit, it also works here and we omit the details.

\begin{thm}
Let $\Omega$ be a bounded pseudoconvex domain satisfying condition $R$ and $p\in \partial \Omega$. If $f\in \aut(\Omega)$ is elliptic and satisfies $f(z)=z+o(|z-p|)$ as $z\rightarrow p$ non-tangentially, then $f(z)\equiv z$.
\end{thm}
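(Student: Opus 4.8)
The plan is to follow Huang's original scheme and to check that the boundary hypothesis enters only through a non-tangential (radial) limit, so that Lemma~\ref{L:jets} lets the non-tangential version go through. First I would convert the non-tangential hypothesis into a boundary jet condition. Since $\Omega$ satisfies condition $R$, Bell's boundary regularity theorem guarantees that every automorphism of $\Omega$---in particular $f$ and all of its iterates---extends to a diffeomorphism of $\overline{\Omega}$ that is $C^\infty$ up to the boundary. As $f$ is holomorphic on $\Omega$ and smooth on $\overline{\Omega}$, its $\bar\partial$ vanishes up to the boundary, so the Taylor expansion of $f$ at $p$ has no antiholomorphic part. Writing $w=z-p$, the non-tangential condition $f(z)=z+o(|z-p|)$ together with Lemma~\ref{L:jets} (which identifies the non-tangential differential with the genuine boundary differential) gives $f(p)=p$ and $df_p=\textup{id}$, whence
$$f(w)=w+P_2(w)+P_3(w)+\cdots,$$
with each $P_j$ a holomorphic polynomial homogeneous of degree $j$.

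Next I would use ellipticity to kill all the $P_j$. Ellipticity means that $\{f^k:k\in\zv\}$ is relatively compact in $\aut(\Omega)$; under condition $R$ the uniform Bell estimates upgrade this to relative compactness in $C^\infty(\overline{\Omega})$, so the boundary jets of the iterates at $p$ remain bounded. On the other hand, because the linear part of $f$ at $p$ is the identity, a direct chain-rule computation shows that the degree-$m$ homogeneous part of $f^k$ at $p$ equals $kP_m$ as soon as $P_2=\cdots=P_{m-1}=0$. Boundedness of the jets then forces $P_m=0$, and by induction on $m$ every $P_j$ vanishes. Hence $f-\textup{id}$ is flat at $p$; in particular $f(z)=z+o(|z-p|^3)$.

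Finally I would conclude $f\equiv z$ from this infinite-order boundary tangency. When $f$ has finite order, this is clean: the finite cyclic action generated by $f$ can be linearized near the boundary fixed point $p$ (Bochner), and since the linear part there is the identity, $f$ equals the identity near $p$, hence on all of $\Omega$ by the identity theorem. In general, using that $f$ is elliptic I would record that $T:=\overline{\langle f\rangle}$ is a compact abelian group, that the continuous homomorphism $g\mapsto dg_p$ kills the dense subgroup $\langle f\rangle$ (since $d(f^k)_p=(df_p)^k=\textup{id}$) and hence all of $T$, and that the jet argument above therefore makes every element of $T$ flat at $p$. Passing to the infinitesimal generator $X$ of a one-parameter subgroup of $T$, one obtains a complete holomorphic vector field on $\Omega$, smooth up to $\overline{\Omega}$ and flat at $p$, and the goal becomes to show $X\equiv 0$.

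This last step---a boundary unique-continuation statement asserting that a complete holomorphic vector field flat at the boundary point $p$ must vanish identically---is where I expect the main difficulty to lie. For strongly pseudoconvex $\Omega$ it follows at once from the Burns--Krantz theorem applied to the flow, but for the general condition-$R$ class one cannot appeal to complex geodesics or to a Hopf-lemma argument, and must instead rely on Huang's original estimates. The point of the present (non-tangential) version is precisely that those estimates, like the reduction above, use the boundary data only through radial and non-tangential limits, so they transfer verbatim; verifying this transfer is the crux.
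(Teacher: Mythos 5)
Your reduction is sound up to the point where every element of $T=\overline{\langle f\rangle}$ fixes $p$ with identity differential there (and, via the iterated-jet argument, is flat at $p$): Bell's theorem under condition $R$ does give the smooth extension to $\overline{\Omega}$ and the relative compactness of $\{f^k\}$ in $C^\infty(\overline{\Omega})$, and Lemma~\ref{L:jets} (or simply the smoothness of the extension) converts the non-tangential hypothesis into $f(p)=p$ and $df_p=\mathrm{id}$. The genuine gap is the final step: you explicitly leave unproved the claim that a complete holomorphic vector field (equivalently, an automorphism) smooth up to $\overline{\Omega}$ and flat at the boundary point $p$ must vanish, deferring to unspecified ``estimates'' of Huang. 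Flatness at a \emph{boundary} point does not trigger the identity theorem, and no Hopf-lemma or complex-geodesic machinery is available for general condition-$R$ domains, so as written the argument does not close.

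The fix is already contained in your own text: Bochner's linearization theorem applies to any \emph{compact} group acting smoothly near a fixed point, not only to finite cyclic ones. Since $T$ is compact and, by Bell's theorem, acts smoothly on the manifold-with-boundary $\overline{\Omega}$ fixing $p$, the averaged chart $\Phi(z)=\int_T (dg_p)^{-1}\bigl(g(z)-p\bigr)\,d\mu(g)$ is a local diffeomorphism at $p$ (its differential there is the identity) and conjugates the $T$-action near $p$ to the isotropy representation $g\mapsto dg_p$, which you have already shown to be trivial. Hence $f=\mathrm{id}$ on a neighborhood of $p$ in $\overline{\Omega}$, therefore on an open subset of $\Omega$, therefore everywhere by the identity theorem. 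This is in substance Huang's original proof, which the paper does not reproduce but invokes with the remark that it uses the boundary data only through non-tangential limits; note that it requires only $df_p=\mathrm{id}$, so your entire higher-order jet computation and the detour through one-parameter subgroups and vector fields can be deleted.
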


Since we do not actually need condition $R$ in the sequel, we omit its precise definition. We only recall that strongly pseudoconvex domains satisfy condition $R$. And, by the Wong-Rosay theorem (see e.g. \cite{GKK}), every automorphism of a bounded strongly pseudoconvex domain not biholomorphic to the unit ball is elliptic. Thus, we have the following non-tangential version of a result due to Krantz.

\begin{cor}\label{C:Aut}
Let $\Omega$ be a bounded strongly pseudoconvex domain not biholomorphic to the unit ball and $p\in \partial \Omega$. If $f\in \aut(\Omega)$ satisfies $f(z)=z+o(|z-p|)$ as $z\rightarrow p$ non-tangentially, then $f(z)\equiv z$.
\end{cor}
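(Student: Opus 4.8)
The plan is to verify the two structural hypotheses of the preceding theorem and then invoke it directly; the corollary is a clean specialization, so essentially all the work is done by the cited results. First I would record what is given: $\Omega$ is a bounded strongly pseudoconvex domain, and as recalled in the text immediately above, strongly pseudoconvex domains satisfy condition $R$. Hence the first hypothesis of the preceding theorem---that $\Omega$ be a bounded pseudoconvex domain satisfying condition $R$---holds automatically.

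The only remaining point is that $f$ must be elliptic. For this I would appeal to the Wong--Rosay theorem: for a bounded strongly pseudoconvex domain, if $\aut(\Omega)$ is non-compact then $\Omega$ is biholomorphic to the unit ball $\bv^n$. Since by hypothesis $\Omega$ is \emph{not} biholomorphic to $\bv^n$, it follows that $\aut(\Omega)$ is compact. Consequently, for any $f\in \aut(\Omega)$, the closed subgroup of $\aut(\Omega)$ generated by $f$ is a closed subgroup of a compact topological group, hence itself compact; by definition this means $f$ is elliptic.

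With both hypotheses now in place---$\Omega$ bounded pseudoconvex satisfying condition $R$, and $f\in \aut(\Omega)$ elliptic with $f(z)=z+o(|z-p|)$ as $z\rightarrow p$ non-tangentially---I would conclude $f(z)\equiv z$ directly from the preceding theorem. No further analytic input is needed, since the non-tangential smallness hypothesis is carried over verbatim.

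As for obstacles, there is no genuinely hard step here: the whole content of the corollary is the observation that the Wong--Rosay dichotomy supplies ellipticity of every automorphism of a non-ball strongly pseudoconvex domain, which then activates the preceding rigidity theorem. The only matter requiring any care is the passage from compactness of the full group $\aut(\Omega)$ to ellipticity of the individual element $f$, which rests on the elementary fact that a closed subgroup of a compact group is compact.
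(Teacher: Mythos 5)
Your proposal is correct and follows exactly the paper's own route: strongly pseudoconvex domains satisfy condition $R$, and the Wong--Rosay theorem forces $\aut(\Omega)$ to be compact (hence every element elliptic) since $\Omega$ is not biholomorphic to the ball, after which the preceding theorem applies. The paper compresses this into one sentence before the corollary, so your only addition is the (correct and harmless) elaboration that a closed subgroup of a compact group is compact.
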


In \cite{FR1}, Burns-Krantz type rigidity results were proven for some fibered domains. In the proof of the main results from \cite{FR1}, the key ingredients are the original Burns-Krantz rigidity on the unit disk, \cite[Theorem 2.2]{H:CJM95} and \cite[Corollary 1.5]{H:CJM95}. Since we have proven the non-tangential version of these results in the previous two sections, we can get the non-tangential version of the main results from \cite{FR1}.

First, consider bounded domains in $\cv^{n+1}$ of the form
$$\Omega=\bigcup\limits_{z\in \Delta} \Omega_z,$$
where $\Omega_z$'s are bounded domains in $\cv^n$ containing the origin. Let $\rho:\Delta\rightarrow \rv^+$ be a function such that $\Omega_z\subset \Delta_{\rho(z)}^n$ for any $z\in \Delta$ and $\rho\in C^0(\bar{\Delta})$. We say that $\Omega$ is a \textit{fibered domain with boundary size zero} if there exists a $\rho(z)$ with
$$\int_{-\pi}^\pi \log \rho(e^{i\theta}) d\theta =-\infty.$$

\begin{thm}
Let $\Omega$ be a fibered domain with boundary size zero and $f$ be a holomorphic self-map of $\Omega$. If $f(z,w)=(z,w)+o(\|(1-z,w)\|^3)$ as $(z,w)\rightarrow (1,0)$ non-tangentially, then $f(z,w)\equiv (z,w)$.
\end{thm}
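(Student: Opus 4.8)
The plan is to follow the architecture of \cite{FR1}, replacing each classical ingredient by the non-tangential version established above. Write $f=(f_1,f_2)$ with $f_1:\Omega\to\Delta$ and $f_2:\Omega\to\cv^n$, and let $\varphi(\zeta)=(\zeta,0)$ be the central holomorphic disk, which lies in $\Omega$ since $0\in\Omega_\zeta$ for every $\zeta$, and which is proper with $\varphi(1)=(1,0)=:p$. The goal is first to show that $f$ fixes $\varphi(\Delta)$ pointwise, and then to upgrade this to $f\equiv\textup{id}$ by a single application of Theorem~\ref{T:Huang}.

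For the first step I would restrict the asymptotics to the central slice. Assuming, as the non-tangential hypothesis at $p$ presupposes, that $\partial\Omega$ is non-degenerate at $p$ (so that near $p$ the relevant boundary of $\Omega$ is the side $\{|z|=1\}$, where $\rho(1)>0$), a non-tangential approach $\zeta\to 1$ in $\Delta$ yields a non-tangential approach $(\zeta,0)\to p$ in $\Omega$. Hence $g(\zeta):=f_1(\zeta,0)=\zeta+o(\zeta-1)$ and $f_2(\zeta,0)=o(|1-\zeta|^3)$ as $\zeta\to 1$ non-tangentially, and moreover $\liminf_{r\to 1^-}\Re(g(r)-r)/(1-r)^3=0$. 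Since $g$ is a holomorphic self-map of $\Delta$, Theorem~\ref{T:KMac} forces $g\equiv\zeta$, i.e. $f_1(\zeta,0)=\zeta$. Consequently $f_2(\zeta,0)\in\Omega_\zeta\subset\Delta^n_{\rho(\zeta)}$, so each component satisfies $|f_{2,k}(\zeta,0)|<\rho(\zeta)$ on $\Delta$.

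Here the boundary size zero condition enters decisively. Each $f_{2,k}(\cdot,0)$ is a bounded holomorphic function on $\Delta$ dominated by $\rho$, so that $\limsup_{\zeta\to e^{i\theta}}|f_{2,k}(\zeta,0)|\le\rho(e^{i\theta})$ by continuity of $\rho$ on $\overline{\Delta}$. If some $f_{2,k}(\cdot,0)\not\equiv 0$, then it lies in the Nevanlinna class and its log-modulus is integrable on $\partial\Delta$; but $\int_{-\pi}^\pi\log|f_{2,k}(e^{i\theta},0)|\,d\theta\le\int_{-\pi}^\pi\log\rho(e^{i\theta})\,d\theta=-\infty$, a contradiction. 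Hence $f_2(\zeta,0)\equiv 0$, and $f$ fixes the central disk $\varphi(\Delta)$.

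It then remains to apply Theorem~\ref{T:Huang} to $\varphi$ with $\epsilon=\mu=1$: the disk $\varphi$ is proper and smooth up to $\partial\Delta$, one has $\rho\circ\varphi(\zeta)=|\zeta|^2-1$ so that $-(-\rho\circ\varphi)=|\zeta|^2-1$ is subharmonic, and $2/(\mu\epsilon)=2\le 3$, whence the hypothesis $f(z,w)=(z,w)+o(\|(1-z,w)\|^3)$ implies the required $o(\|(z,w)-p\|^2)$; since $f$ fixes $\varphi(\Delta)$, Theorem~\ref{T:Huang} gives $f\equiv\textup{id}$. I expect the main obstacle to be the verification of the hypotheses of Theorem~\ref{T:Huang} along the \emph{entire} circle $\varphi(\partial\Delta)$: wherever $\rho$ vanishes on $\partial\Delta$ (which, by continuity, the condition $\int\log\rho=-\infty$ forces to happen somewhere) the fibers $\Omega_\zeta$ pinch and $\partial\Omega$ degenerates, so securing a defining function smooth over $\varphi(\overline{\Delta})$ with $d\rho\neq 0$ on $\varphi(\partial\Delta)$, together with the transversality of $\varphi$, is exactly the delicate point that must be handled as in \cite{FR1}. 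A secondary, more routine, technical point is the bookkeeping that non-tangential approach to $p$ in $\Omega$ restricts and lifts correctly between the base $\Delta$, the central slice, and the fibers.
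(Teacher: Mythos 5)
Your proposal follows exactly the route the paper intends: the paper gives no independent argument here, but states that the proof from \cite{FR1} (restriction to the central slice plus the disk Burns--Krantz rigidity for the first component, the $\int\log\rho=-\infty$ Nevanlinna-class argument killing the fiber components, and then \cite[Theorem 2.2]{H:CJM95} applied to $\varphi(\zeta)=(\zeta,0)$) carries over once the disk rigidity and Huang's theorem are replaced by their non-tangential versions, which is precisely your three steps with Theorem \ref{T:KMac} and Theorem \ref{T:Huang}. The caveats you flag --- verifying the hypotheses of Theorem \ref{T:Huang} along $\varphi(\partial\Delta)$ where the fibers may pinch, and checking that $(\zeta,0)\rightarrow(1,0)$ non-tangentially in $\Omega$ when $\zeta\rightarrow 1$ non-tangentially in $\Delta$ --- are exactly the points the paper leaves implicit by deferring to \cite{FR1}, so they are fair to defer as you do.
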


\begin{thm}
Let $\Omega$ be a fibered domain with boundary size zero and $f$ be a holomorphic self-map of $\Omega$. If $f(z_0,0)=(z_0,0)$ for some $z_0\in \Delta$ and $f(z,w)=(z,w)+o(\|(1-z,w)\|^2)$ as $(z,w)\rightarrow (1,0)$ non-tangentially, then $f(z,w)\equiv (z,w)$.
\end{thm}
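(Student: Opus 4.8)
The plan is to write $f=(f_1,f_2)$ with $f_1$ the $\Delta$-component and $f_2$ the $\cv^n$-component, to reduce everything to showing that $f$ fixes the central analytic disk $\varphi(\zeta):=(\zeta,0)$ pointwise, and then to invoke Theorem \ref{T:Huang}. Since $0\in\Omega_z$ for every $z\in\Delta$, the map $\varphi:\Delta\rightarrow\Omega$ is a well-defined holomorphic (indeed affine) disk with $\varphi(1)=(1,0)=:p$ and $\varphi(\partial\Delta)\subset\partial\Omega$, hence proper and smooth up to the boundary. Once $f\circ\varphi\equiv\varphi$ is established, Theorem \ref{T:Huang} applied to $\varphi$ with $\epsilon=\mu=1$, so that the admissible exponent $2/(\mu\epsilon)$ matches the $2$ in the hypothesis, will yield $f(z,w)\equiv(z,w)$.

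First I would identify the first component along the disk. Set $\tilde f(\zeta):=f_1(\zeta,0)$; since $f(\zeta,0)\in\Omega$, we have $\tilde f:\Delta\rightarrow\Delta$, and the interior fixed point gives $\tilde f(z_0)=z_0$. Because $\varphi$ is transversal to $\partial\Omega$, a non-tangential approach $\zeta\rightarrow1$ in $\Delta$ corresponds to a non-tangential approach $(\zeta,0)\rightarrow p$ in $\Omega$; hence the hypothesis $f(\zeta,0)=(\zeta,0)+o(\|(1-\zeta,0)\|^2)=(\zeta,0)+o(|1-\zeta|^2)$ gives $\tilde f(\zeta)=\zeta+o(|1-\zeta|^2)$, and a fortiori $\tilde f(\zeta)=\zeta+o(\zeta-1)$, non-tangentially. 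Corollary \ref{C:Delta} then forces $\tilde f\equiv\mathrm{id}$, i.e. $f_1(\zeta,0)\equiv\zeta$.

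Next I would treat the second component, and this is where the boundary-size-zero condition enters (reproving the relevant lemma of \cite{FR1}). Knowing $f_1(\zeta,0)=\zeta$, the point $(\zeta,f_2(\zeta,0))=f(\zeta,0)$ lies in $\Omega$, so $f_2(\zeta,0)\in\Omega_\zeta\subset\Delta^n_{\rho(\zeta)}$ and thus $|f_2^{(j)}(\zeta,0)|<\rho(\zeta)$ for each coordinate $f_2^{(j)}$. If some $f_2^{(j)}(\cdot,0)\not\equiv0$, then $u(\zeta):=\log|f_2^{(j)}(\zeta,0)|$ is subharmonic on $\Delta$ and bounded above (as $\rho\in C^0(\bar\Delta)$ is bounded), with $\limsup_{\zeta\rightarrow e^{i\theta}}u(\zeta)\le\log\rho(e^{i\theta})$ by continuity of $\rho$. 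The sub-mean-value inequality for a subharmonic function bounded above then gives
$$u(0)\le\frac{1}{2\pi}\int_{-\pi}^\pi\limsup_{\zeta\rightarrow e^{i\theta}}u(\zeta)\,d\theta\le\frac{1}{2\pi}\int_{-\pi}^\pi\log\rho(e^{i\theta})\,d\theta=-\infty,$$
a contradiction. Hence $f_2(\zeta,0)\equiv0$, and combined with the previous step we obtain $f\circ\varphi\equiv\varphi$.

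Finally I would verify the geometric hypotheses of Theorem \ref{T:Huang} for $\varphi$ and conclude. The remaining input is a defining function $r$ of $\Omega$, smooth near $\varphi(\bar\Delta)$ with $dr\neq0$ on $\varphi(\partial\Delta)$, such that $r\circ\varphi$ is subharmonic (the case $\mu=1$ of $-(-r\circ\varphi)^\mu$ subharmonic), together with transversality of $\varphi$; granting these, Theorem \ref{T:Huang} with $\epsilon=\mu=1$ upgrades $f\circ\varphi\equiv\varphi$ and $f(z,w)=(z,w)+o(\|(1-z,w)\|^2)$ to $f\equiv\mathrm{id}$. I expect the main obstacle to be precisely this last verification: the exponent bookkeeping is tight, forcing $\mu=\epsilon=1$, so one must exhibit a defining function whose restriction to the central disk is genuinely subharmonic and check transversality — properties that for these possibly non-pseudoconvex fibered domains are not automatic and must be read off from the defining data of $\Omega$ as in \cite{FR1}. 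By contrast, the subharmonic estimate for $f_2(\cdot,0)$ is the other place where $\int_{-\pi}^\pi\log\rho\,d\theta=-\infty$ is genuinely used, but it is routine once the first component has been pinned down.
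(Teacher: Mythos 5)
Your proposal follows essentially the same route as the paper: the paper gives no written proof of this theorem, saying only that the arguments of \cite{FR1} go through once the disk rigidity with interior fixed point and \cite[Theorem 2.2]{H:CJM95} are replaced by their non-tangential versions (Corollary \ref{C:Delta} and Theorem \ref{T:Huang}), and your three steps --- pinning down $f_1(\cdot,0)$ via Corollary \ref{C:Delta}, killing $f_2(\cdot,0)$ via the condition $\int_{-\pi}^{\pi}\log\rho(e^{i\theta})\,d\theta=-\infty$, then applying Theorem \ref{T:Huang} with $\epsilon=\mu=1$ --- are exactly the steps of \cite{FR1}. One small repair in the middle step: $u(0)=-\infty$ is not by itself a contradiction with $f_2^{(j)}(\cdot,0)\not\equiv 0$, since a subharmonic function $\not\equiv-\infty$ can still take the value $-\infty$ at a point (e.g. $\log|\zeta|$); instead run the same estimate with the Poisson kernel at an arbitrary $\zeta_0\in\Delta$ (or at a point where $f_2^{(j)}(\zeta_0,0)\neq 0$), which yields $u\equiv-\infty$ and hence $f_2(\cdot,0)\equiv 0$.
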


Next, consider domains in $\cv^{n+m}$ of the form
$$\Omega=\bigcup\limits_{z\in D} \Omega_z,$$
where $\Omega_z$'s are bounded complete Reinhardt domains in $\cv^n$ and $D$ is a domain in $\cv^m$ such that the non-tangential Burns-Krantz rigidity holds at some $p\in \partial D$. Let $\delta:D \rightarrow \rv^+$ be a function such that $\Delta_{\delta(z)}^n\subset \Omega_z$ for any $z\in D$ and $\delta(z)\rightarrow 0$ as $z\rightarrow p$. For any $(z,w)\in \Omega$, denote by $D_{z,w}$ the linear graph over $D$ through $(z,w)$ and $(p,0)$. We define a \textit{cone} with end $(p,0)$ (of size $\delta$) as
$$C_\delta:=\bigcup\{D_{z,w};\ z\in D,\ w\in \Delta_{\delta(z)}^n\}.$$
We say that $\Omega$ is a \textit{fibered domain satisfying the cone condition} if it contains a cone $C_\delta$.

\begin{thm}
Let $\Omega$ be a fibered domain satisfying the cone condition and $f$ be a holomorphic self-map of $\Omega$. If $f(z,w)=(z,w)+o(\|(z-p,w)\|^3)$ as $(z,w)\rightarrow (p,0)$ non-tangentially, then $f(z,w)\equiv (z,w)$.
\end{thm}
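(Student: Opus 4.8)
The plan is to follow the scheme of \cite{FR1}, reducing the rigidity on $\Omega$ to rigidity on the base $D$ and along analytic disks, and to substitute for the radial-limit inputs used there the non-tangential statements proved above, namely Theorem \ref{T:Delta} on the disk and Theorem \ref{T:Huang} on a fixed geodesic. I would write $f=(g,h)$, where $g\colon\Omega\to\cv^m$ is the base component and $h\colon\Omega\to\cv^n$ the fibre component; since $f(\Omega)\subset\Omega=\bigcup_{z\in D}\Omega_z$, for every $(z,w)\in\Omega$ we have $g(z,w)\in D$ and $h(z,w)\in\Omega_{g(z,w)}$, and the hypothesis splits as $g(z,w)=z+o(\|(z-p,w)\|^3)$ and $h(z,w)=w+o(\|(z-p,w)\|^3)$ as $(z,w)\to(p,0)$ non-tangentially.

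The first step is to show $g\equiv\pi$, where $\pi(z,w)=z$. The cone condition furnishes, for each $z_0\in D$ and $w_0\in\Delta_{\delta(z_0)}^n$, the affine graph $D_{z_0,w_0}=\{\Psi(\zeta):\zeta\in D\}\subset C_\delta\subset\Omega$ with $\Psi(\zeta)=(\zeta,A(\zeta-p))$ for a bounded linear $A\colon\cv^m\to\cv^n$ satisfying $A(z_0-p)=w_0$, so that $\Psi(p)=(p,0)$ and $\|\Psi(\zeta)-(p,0)\|\asymp|\zeta-p|$. Thus $\pi\circ f\circ\Psi\colon D\to D$ is a holomorphic self-map of $D$ with $(\pi\circ f\circ\Psi)(\zeta)=\zeta+o(|\zeta-p|^3)$ as $\zeta\to p$ non-tangentially. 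Because the non-tangential Burns-Krantz rigidity holds at $p\in\partial D$, this forces $\pi\circ f\circ\Psi=\mathrm{id}_D$, that is $g\equiv\pi$ along every such graph. As these graphs sweep out an open subset of $\Omega$ (the interior of $C_\delta$), the identity theorem gives $g\equiv\pi$ on all of $\Omega$, so that $f(z,w)=(z,h(z,w))$ and $f$ preserves the fibration.

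It then remains to prove $h(z,w)\equiv w$. With $f$ fibre-preserving, I would, following \cite{FR1}, exhibit for the directions supplied by the cone a proper analytic disk $\varphi\colon\Delta\to\Omega$ with $\varphi(1)=(p,0)$, transversal to $\partial\Omega$ and $\epsilon$-H\"{o}lder up to the boundary and satisfying the subharmonicity hypothesis of Theorem \ref{T:Huang}, such that $f$ fixes $\varphi(\Delta)$. That $f$ fixes such a disk is where the completeness of the Reinhardt fibres $\Omega_z$ enters (together with the already-established $g\equiv\pi$); it is established exactly as in \cite{FR1} (cf. the mechanism in the proof of Theorem \ref{T:O}, where one checks that $f\circ\varphi$ carries the same boundary datum and invokes uniqueness), the only change being that the radial limits are replaced by non-tangential ones. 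Granting this, Theorem \ref{T:Huang} with $\epsilon,\mu$ chosen so that $2/(\mu\epsilon)\le 3$ forces $f\equiv\mathrm{id}$ along $\varphi(\Delta)$, and letting the disk range over the cone of directions, the identity theorem yields $f(z,w)\equiv(z,w)$.

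The step I expect to be the main obstacle is not any of these reductions in themselves---in \cite{FR1} they are carried out with radial limits and transcribe almost verbatim---but the bookkeeping that keeps the hypothesis \emph{non-tangential} throughout. Concretely, one must verify that $\Psi$ and the disks $\varphi$ send non-tangential approach in $D$, respectively in $\Delta$, to non-tangential approach to $(p,0)$ in $\Omega$ (this is precisely the design of the cone as a non-tangential approach region with vertex $(p,0)$), and that post-composing $f$ with the boundary-H\"{o}lder left inverses of the geodesics preserves the cubic order of vanishing non-tangentially. Both points come down to the relevant maps being bi-Lipschitz (the affine graphs) or transversal and H\"{o}lder (the disks) at $(p,0)$, together with Lemma \ref{L:jets}, which guarantees that the non-tangential differential of $f$ at $(p,0)$ is the identity and that $df_z\to df_{(p,0)}$ non-tangentially. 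This is exactly the input that makes Theorem \ref{T:Delta} and Theorem \ref{T:Huang} applicable in the non-tangential regime, and granting it the argument of \cite{FR1} carries over.
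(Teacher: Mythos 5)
Your proposal takes essentially the same route as the paper, which gives no separate proof of this theorem: it simply observes that the arguments of \cite{FR1} carry over once the radial-limit ingredients (the Burns--Krantz rigidity on the base and \cite[Theorem 2.2]{H:CJM95}) are replaced by their non-tangential analogues, which is exactly the substitution you carry out. Your two-step reconstruction --- base rigidity via the linear graphs in the cone together with the assumed non-tangential rigidity on $D$, then fibre rigidity via Theorem \ref{T:Huang} on invariant disks, with the cone condition supplying the non-tangentiality bookkeeping --- is consistent with that scheme.
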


\begin{thm}
Let $\Omega$ be a fibered domain satisfying the cone condition and $f$ be a holomorphic self-map of $\Omega$. Suppose that $f(z,w)=(z,w)+o(\|(z-p,w)\|^2)$ as $(z,w)\rightarrow (p,0)$ non-tangentially and that the fixed point set $\Gamma$ of $f$ satisfies
$$\Gamma\cap D_{z,w}\neq \emptyset,\ \forall D_{z,w}\subset C_\delta.$$
Then, $f(z,w)\equiv (z,w)$.
\end{thm}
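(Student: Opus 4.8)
The plan is to reduce the statement to the two ingredients already at our disposal: the non-tangential Burns–Krantz rigidity with an interior fixed point on the base $D$ (in the form of Corollary \ref{C:D}/Corollary \ref{C:O}), which will pin down the base component of $f$ along every linear graph, and the ``fixing a curve'' rigidity of Theorem \ref{T:Huang}, which will upgrade a single pointwise-fixed linear disk to the global identity. Write $f=(f_1,f_2)$ with $f_1:\Omega\to D$ and $f_2(z,w)\in\Omega_{f_1(z,w)}$, and parametrize each linear graph $D_{z,w}\subset C_\delta$ by $\Phi_{z,w}(\zeta)=(\zeta,\ell_{z,w}(\zeta))$, $\zeta\in D$, where $\ell_{z,w}$ is the affine map with $\ell_{z,w}(p)=0$ and $\ell_{z,w}(z)=w$. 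Since $\Phi_{z,w}$ is linear it is smooth up to the boundary, and it sends $\zeta\to p$ non-tangentially in $D$ to $(\zeta,\ell_{z,w}(\zeta))\to(p,0)$ non-tangentially in $\Omega$; moreover $\|\ell_{z,w}(\zeta)\|=O(|\zeta-p|)$, so along the graph $\|(\zeta-p,\ell_{z,w}(\zeta))\|=O(|\zeta-p|)$.

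First I would fix the base component. For a graph $D_{z,w}\subset C_\delta$ consider $g:=f_1\circ\Phi_{z,w}:D\to D$. By the previous estimate the hypothesis gives $g(\zeta)=\zeta+o(|\zeta-p|^2)$ as $\zeta\to p$ non-tangentially, and the assumption $\Gamma\cap D_{z,w}\neq\emptyset$ supplies an interior fixed point $\zeta_0$ of $g$. The interior-fixed-point non-tangential rigidity assumed to hold for $D$ at $p$ (cf. Corollaries \ref{C:D} and \ref{C:O}) then forces $g\equiv\textup{id}_D$, i.e. $f_1(\zeta,\ell_{z,w}(\zeta))=\zeta$ for all $\zeta\in D$. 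As $(z,w)$ ranges over the parameters defining $C_\delta$, varying $\zeta$ and $w$ moves all $m+n$ coordinates, so these graphs sweep out the open set $C_\delta$; hence $f_1=z$ on $C_\delta$, and $f_1(z,w)\equiv z$ on all of $\Omega$ by the identity theorem. In particular $f(z,w)=(z,f_2(z,w))$ preserves the fibration, each fiber map $h_z:=f_2(z,\cdot)$ being a self-map of the complete Reinhardt domain $\Omega_z$.

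It remains to show $f_2(z,w)\equiv w$, and the plan is to exhibit a single linear disk that $f$ fixes pointwise and then invoke Theorem \ref{T:Huang}. Fix a graph $D_{z,w}\subset C_\delta$ through an interior fixed point $q=(\zeta_0,\ell_{z,w}(\zeta_0))\in\Gamma$, and set $\Psi(\zeta):=f_2(\zeta,\ell_{z,w}(\zeta))-\ell_{z,w}(\zeta)$, a holomorphic map $D\to\cv^n$ with $\Psi(\zeta_0)=0$ (as $f(q)=q$) and $\Psi(\zeta)=o(|\zeta-p|^2)$ non-tangentially; note $f(\Phi_{z,w}(\zeta))$ lies in the same fiber $\{\zeta\}\times\Omega_\zeta$ as $\Phi_{z,w}(\zeta)$ because $f_1$ is already fixed along the graph. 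The hard part is to deduce $\Psi\equiv 0$, i.e. that $f$ fixes the graph pointwise. The inscribed polydisks $\Delta_{\delta(z)}^n\subset\Omega_z$ furnish, as in \cite{FR1}, a defining function $\rho$ for which $-(-\rho\circ\Phi_{z,w})^\mu$ is subharmonic for some $\mu\le 1$, and, combined with the balanced (complete Reinhardt) geometry of the fibers, a Hopf-lemma argument in the fiber direction based on $\Psi(\zeta_0)=0$ and the boundary decay forces $\Psi\equiv 0$. Restricting $\Phi_{z,w}$ to an affine disk through $p$ then yields a proper, transversal, and (being linear) smooth map $\varphi:\Delta\to\Omega$ with $\varphi(1)=(p,0)$ that $f$ fixes pointwise; taking $\epsilon=\mu=1$ the required order in Theorem \ref{T:Huang} is $2/\mu\epsilon=2$, exactly matching the hypothesis $f(z)=z+o(|z-p|^2)$, so Theorem \ref{T:Huang} gives $f(z,w)\equiv(z,w)$.

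The main obstacle is precisely the fiber step of the preceding paragraph: passing from ``base component fixed along every graph, plus one interior fixed point per graph'' to ``$f$ fixes a whole linear graph pointwise.'' Because $\Psi$ is vector-valued and the boundary vanishing alone cannot kill a bounded holomorphic map, one genuinely needs the complete Reinhardt structure of the fibers and the cone condition (through the inscribed polydisks $\Delta_{\delta(z)}^n$) to construct the subharmonic minorant driving the Hopf-lemma argument. This is the technical heart, carried out as in \cite{FR1}, and it is exactly what renders Theorem \ref{T:Huang} applicable with the sharp exponent $2$.
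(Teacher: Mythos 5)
Your proposal takes essentially the same route as the paper, which for this theorem offers no independent argument but simply notes that the proof of the corresponding result in \cite{FR1} carries over once the Burns--Krantz rigidity on the disk, \cite[Theorem 2.2]{H:CJM95} and \cite[Corollary 1.5]{H:CJM95} are replaced by their non-tangential counterparts (Corollary \ref{C:D}/Corollary \ref{C:O}, Theorem \ref{T:Huang}): your three steps --- interior-fixed-point rigidity on the base along each linear graph, the fiber argument via the complete Reinhardt structure, and the final application of Theorem \ref{T:Huang} with $\epsilon=\mu=1$ --- are exactly that recipe. Your deferral of the fiber step to the arguments of \cite{FR1} matches the paper's own level of detail, so nothing is missing relative to the source.
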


\end{document}